\theoremstyle{plain}
\newtheorem{thm}{Theorem}[section]
\newtheorem*{thm*}{Theorem}
\newtheorem*{bibl}{Bibliographical remark}
\newtheorem*{conv}{Convention}
\newtheorem*{quest}{Question}
\newtheorem{lem}[thm]{Lemma}
\newtheorem{cor}[thm]{Corollary}
\newtheorem{definition}[thm]{Definition}
\newtheorem{fact}[thm]{Fact}
\newcommand{\pa}{\mathrm{PA}}
\newcommand{\tq}{\mathrm{Q}}
\newcommand{\ID}{\mathrm{I}\Sigma_0 + \mathrm{exp}}
\newcommand{\ISn}{\mathrm{I}\Sigma_n}
\newcommand{\ISz}{\mathrm{I}\Sigma_0}
\newcommand{\ISo}{\mathrm{I}\Sigma_1}
\newcommand{\ISnp}{\mathrm{I}\Sigma_{n+1}}
\newcommand{\Th}{\mathrm{Th}}
\newcommand{\Sat}{\mathrm{Sat}}
\newcommand{\SSy}{\mathrm{SSy}}
\newcommand{\T}{\mathrm{T}}
\newcommand{\s}{\mathrm{S}}
\newcommand{\con}{\mathrm{Con}}
\newcommand{\tr}{\mathrm{Tr}}
\newcommand{\prf}{\mathrm{Prf}}
\newcommand{\pr}{\mathrm{Pr}}
\newcommand{\defeq}{\coloneqq}
\title[Hierarchical incompleteness]
      {Hierarchical incompleteness results for arithmetically definable extensions of fragments of arithmetic}
\author{Rasmus Blanck}
\address{Department of Philosophy, Linguistics and Theory of Science, University of Gothenburg, Box 200, SE-405 30, Gothenburg, Sweden}
\email{rasmus.blanck@gu.se}
\thanks{Accepted for publication in \emph{The Review of Symbolic Logic} under the terms of the Creative Commons Attribution license (\url{http://creativecommons.org/licenses/by/4.0}), which permits unrestricted re-use, distribution, and reproduction in any medium, provided the original work is properly cited.}
\subjclass[2020]{ 03F25, 03F30, 03F40, 03H15.}
\keywords{incompleteness, fragments of arithmetic, arithmetically definable theories, partial conservativity.}
\begin{document}

\begin{abstract}
There has been a recent interest in hierarchical generalisations of 
classic incompleteness results. This paper provides evidence that 
such generalisations are readily obtainable from suitably formulated 
hierarchical versions of the principles used in the original proofs. 
By collecting such principles, we prove hierarchical versions of 
Mostowski's theorem on independent formulae, Kripke's theorem on 
flexible formulae, Woodin's theorem on the universal algorithm, 
and a few related results.
As a corollary, we obtain the expected result that the formula 
expressing ``$\T$ is $\Sigma_n$-ill'' is a canonical example of 
a $\Sigma_{n+1}$ formula that is $\Pi_{n+1}$-conservative over $\T$.
\end{abstract}

 \maketitle

\section{Introduction}
There has been a recent interest in hierarchical generalisations of
classic incompleteness results
\citep{chao_seraji2018, kikuchi_kurahashi2017, kurahashi2018, 
salehi_seraji2017}. A sample result, generalising the G\"odel-Rosser
incompleteness theorem, and independently proved by both 
\citet{kikuchi_kurahashi2017} and \citet{salehi_seraji2017}, is:

\begin{thm}\label{theorem:1}
 Let $\T$ be a $\Sigma_{n+1}$-definable, $\Sigma_n$-sound extension 
 of $\pa$. Then there is a $\Pi_{n+1}$ sentence that is undecidable 
 in $\T$.
\end{thm}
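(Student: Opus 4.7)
The plan is to adapt the standard G\"odel--Rosser construction to the hierarchical setting. Since $\T$ is $\Sigma_{n+1}$-definable, the natural provability predicate $\pr_{\T}(x) = \exists d\, \prf_{\T}(d, x)$ has complexity $\Sigma_{n+1}$. A hierarchical analogue of Craig's trick --- rewriting the $\Sigma_{n+1}$-axiom set as the projection of a $\Pi_n$-set and absorbing the witnesses into the axioms themselves --- allows us to arrange that the reformulated $\prf_{\T}$ is a $\Pi_n$-predicate. By the diagonal lemma, obtain a sentence $\varphi$ with
\[
\pa \vdash \varphi \leftrightarrow \forall x \bigl( \prf_{\T}(x, \ulcorner\varphi\urcorner) \to \exists y \le x\, \prf_{\T}(y, \ulcorner\neg\varphi\urcorner) \bigr).
\]
The matrix is a Boolean combination of $\Pi_n$-formulas (using that bounded quantifiers preserve complexity), hence is $\Delta_{n+1}$; the outer universal then places $\varphi$ in $\Pi_{n+1}$.

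I would then argue both $\T \nvdash \varphi$ and $\T \nvdash \neg \varphi$ from the mere consistency of $\T$ (which follows from $\Sigma_n$-soundness, since otherwise $\T$ would prove the false $\Sigma_0$-sentence $0 = 1$). Suppose $\T \vdash \varphi$ via an actual proof with code $d_0$; through the chosen axiomatization, $\T$ proves the specific instance $\prf_{\T}(\overline{d_0}, \ulcorner\varphi\urcorner)$, and the fixed point yields $\T \vdash \exists y \le \overline{d_0}\, \prf_{\T}(y, \ulcorner\neg\varphi\urcorner)$. A bounded search through $y \le d_0$ then either exhibits a concrete proof of $\neg\varphi$ (giving inconsistency together with the proof of $\varphi$), or it forces $\T \vdash \forall y \le \overline{d_0}\, \neg\prf_{\T}(y, \ulcorner\neg\varphi\urcorner)$, contradicting the existential. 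The case $\T \vdash \neg \varphi$ is handled symmetrically. Note that a naive G\"odel-style fixed point $\varphi \leftrightarrow \neg\pr_{\T}(\ulcorner\varphi\urcorner)$ would run aground in the second direction, as deducing a contradiction from $\T \vdash \pr_{\T}(\ulcorner\varphi\urcorner)$ would require $\Sigma_{n+1}$-soundness; this is exactly what the Rosser trick circumvents.

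The main obstacle is that $\pa$ lacks $\Sigma_{n+1}$-completeness, so one cannot directly formalize ``$\T$ proves $\psi$'' inside $\T$ --- in particular, the classical first derivability condition does not transfer for free to the $\Sigma_{n+1}$-level. The Craig-style reformulation is precisely what overcomes this: once axiom-membership is witnessed syntactically within the axioms themselves, the specific-instance verification $\prf_{\T}(\overline{d}, \ulcorner\psi\urcorner)$ reduces to a $\Delta_0$-check that $\pa$ discharges by $\Sigma_1$-completeness. With this internal-verification step in hand, the bounded-search Rosser argument proceeds exactly as in the classical case, yielding the required $\Pi_{n+1}$-undecidable sentence.
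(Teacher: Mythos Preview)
Your Rosser-style strategy is the right idea, but the crucial step fails as written. The generalised Craig trick (Fact~\ref{fact:craig}) takes a $\Sigma_{n+1}$-axiomatisation to a $\Pi_n$-axiomatisation, not to a $\Delta_0$ one: embedding the existential witness $y$ from $\exists y\,\pi(x,y)$ into the axiom still leaves you having to verify the $\Pi_n$ condition $\pi(\ulcorner\phi\urcorner,y)$. So for $n\ge 1$ your ``specific-instance verification $\prf_\T(\overline{d_0},\ulcorner\varphi\urcorner)$'' is a true $\Pi_n$ sentence, and $\T$ need not prove it; likewise your bounded search produces true $\Sigma_n$ instances $\neg\prf_\T(\overline{m},\ulcorner\neg\varphi\urcorner)$, and again $\T$ need not prove them. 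Nothing in the hypotheses gives $\T$ either $\Pi_n$- or $\Sigma_n$-completeness. You invoke $\Sigma_n$-soundness only to get bare consistency, but bare consistency is exactly what is \emph{not} sufficient here --- the whole point of the hierarchy is that the $n=0$ coincidence ``consistent $=\Sigma_0$-sound and $\prf$ is $\Delta_0$'' breaks.

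The fix is to run the entire Rosser argument inside $\T^* := \T + \Th_{\Sigma_{n+1}}(\mathbb{N})$ rather than $\T$. By Fact~\ref{fact:soundness}, $\Sigma_n$-soundness is equivalent to consistency of $\T^*$; and $\T^*$ \emph{is} $\Pi_n$- and $\Sigma_n$-complete, so it proves all the $\prf_\T$ and $\neg\prf_\T$ instances you need. Both cases of the Rosser dichotomy then yield inconsistency of $\T^*$, contradicting $\Sigma_n$-soundness of $\T$. With this repair your argument goes through and gives a direct Rosser-style proof, which is indeed different from the paper's route: the paper does not prove Theorem~\ref{theorem:1} in isolation but obtains it as a by-product of the stronger independence result (Theorem~\ref{theorem:independence}), whose proof is recursion-theoretic and already works relative to $\tq + \Th_{\mathrm{B}_n}(\mathbb{N})$ via Fact~\ref{fact:definability}.
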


In this paper I argue that such hierarchical generalisations can 
often be obtained from the original proofs by replacing certain 
principles used in the proofs by appropriately formulated 
hierarchical versions, while the essence of the arguments 
remains the same. The hierarchical principles, once appropriately
formulated, are in turn often provable by appropriate generalisations
of the core concepts employed in the proofs of the ordinary ones, but 
even so, there is no single source to which to turn for them. Both 
\citet{smorynski1985} and \citet{beklemishev2005} give good partial 
accounts of the syntactical side, and \citet{poizat2000} gives a 
hierarchical perspective on the basic model theory of arithmetic, 
including model-theoretic proofs of hierarchical versions of 
G\"odel's first and second incompleteness theorems. Still, I find 
certain aspects lacking. With this in mind, one aim of this paper 
is to collect a number of principles that may be useful to the reader 
who herself wishes to prove hierarchical incompleteness results without 
having to reinvent the wheel.

These principles are then put to use to prove a number of general 
incompleteness results for arithmetically definable extensions of 
fragments of $\pa$. The goal is not to prove the sharpest or most 
general results (in fact some of the results follow from
each other), but rather to exemplify how the hierarchical 
principles enter into more or less well-known proof methods. Even so, 
the results presented here improve on some results of 
\citet{chao_seraji2018}, \citet{kikuchi_kurahashi2017}, and 
\citet{salehi_seraji2017}, and sharpen some of \citet{blanck2017a}, 
\citet{hamkins_20XXa}, \citet{lindstrom1984}, and \cite{woodin2011}. 
These sharpenings are in terms of gauging the amount of induction 
needed for the proofs, bringing the (in this particular sub-field 
largely ignored) fragments-of-arithmetic perspective to attention.

In order to state the results in a more general form, I have chosen 
to consider only extensions of $\mathrm{I}\Delta_0 + \mathrm{exp}$, 
although under the somewhat unusual name $\ID$.
This allows for formulating results for extensions of, e.g., 
$\ISn + \mathrm{exp}$, ensuring that partial satisfaction predicates 
are well behaved even for $n=0$. While it is sometimes possible to push 
the background theory below $\ID$, I have refrained from doing so, to 
instead focus on the more general hierarchical picture.

\section{Notation and conventions}

The expressions $\exists x \leq t \phi(x)$ and $\forall x \leq t \phi(x)$ 
are used as shorthand for $\exists x (x \leq t \land \phi(x))$ and 
$\forall x (x\leq t \rightarrow \phi(x))$, where $t$ is some term in
the language of arithmetic. The initial quantifiers of these formulae 
are \emph{bounded} and a formula containing only bounded quantifiers 
is a \emph{bounded formula}. Let $\Delta_0 = \Sigma_0 = \Pi_0$ be the
class of bounded formulae. 

The arithmetical hierarchy is defined as follows. A formula is 
$\Sigma_{n+1}$ iff it is of the form $\exists x_1 \dots x_m \pi(x_1,\dots,x_m)$ 
where $\pi$ is a $\Pi_n$ formula (that may contain other variables
than $x_1,\dots,x_m$). Similarly, a formula is $\Pi_{n+1}$ iff it is
of the form $\forall x_1 \dots x_m \sigma(x_1,\dots,x_m)$ where 
$\sigma$ is a $\Sigma_n$ formula.
$\Delta_n(\mathcal{M})$ ($\Delta_n(\T)$)
is the set of $\Sigma_n$ formulae that are equivalent to $\Pi_n$ 
formulae in a given model $\mathcal{M}$ (theory $\T$). 
Throughout the paper $\Gamma$ denotes either $\Sigma_{n+1}$ or 
$\Pi_{n+1}$, and we always assume only that $n\geq 0$. 

Theories are understood as sets of sentences, thought of as the set of 
nonlogical axioms of the theory. $\mathrm{I}\Sigma_n$ is the theory 
obtained by adding induction for $\Sigma_n$ formulae to Robinson's 
arithmetic $\tq$, while $\ID$ is $\tq$ plus $\Sigma_0$-induction plus 
an axiom stating that the exponentiation function is total. We assume 
that all theories denoted $\T$, etc., are consistent, arithmetically 
definable, extensions of $\ID$.

$\T$ is $\Gamma$\emph{-sound} iff for all $\Gamma$ sentences $\gamma$, if $\T \vdash \gamma$, 
then $\mathbb{N} \models \gamma$. The converse implication is sometimes known 
as $\Gamma$-completeness: hence $\T$ is $\Gamma$\emph{-complete} iff for all 
$\Gamma$ sentences $\gamma$, if $\mathbb{N} \models \gamma$, then $\T \vdash \gamma$.
$\s$ is $\Gamma$\emph{-conservative} over $\T$ iff for all $\Gamma$ sentences $\gamma$,
if $\s \vdash \gamma$, then $\T \vdash \gamma$.

We rely on a coding of finite sets and sequences in $\ID$, as developed by 
\citet[Chapter I.1]{hajek_pudlak1993}.
The set $\Sigma_0(X)$ is obtained by adding atomic formulae 
of the form $t \in X$ (where $t$ is any term) and closing under propositional
connectives and bounded quantifiers. The set $\Sigma_1(X)$ is obtained 
from $\Sigma_0(X)$ in the usual manner. 
 
 Let $\varepsilon$ be Ackermann's membership relation: $n\varepsilon a$ 
 expresses ``the $n$th bit of the binary expansion of $a$ is $1$''. 
 In other words, $a$ can be regarded as a code for the set consisting of
 all $n$ such that $n\varepsilon a$.
 Then $a$ is a \emph{$z$-piece of $\phi(x)$} if 
 $\forall n < z (n\varepsilon a \leftrightarrow \phi(n))$.

If $\phi(x)$ is any formula, $\ulcorner \phi(x) \urcorner$ denotes 
the numeral for the G\"odel number of $\phi(x)$ under some fixed 
G\"odel numbering, but we make no typographical distinction between 
natural numbers and the corresponding numerals. We use Feferman's 
dot notation $\ulcorner \phi(\dot{x})\urcorner$ to represent the 
G\"odel number of the sentence obtained by replacing the variable 
$x$ with the actual value of $x$; hence $x$ is free in $\ulcorner \phi(\dot{x})\urcorner$.
The notation $\defeq$ is used to express equality between formulae. 
Let $\top \defeq 0 = 0$ and $\bot \defeq \lnot \top$. Let $\phi^0 \defeq \lnot \phi$ 
and $\phi^1 \defeq \phi$. \footnote{The literature is not in total agreement 
about this convention. We follow \citet{hajek_pudlak1993}, 
while for example \citet{lindstrom2003} has it the other way around. The mnemonic
here is that the superscript $1$ signals that $\phi$ occurs positively.} 

A relation $X$ is \emph{numerated} in $\T$ by a formula $\phi$ iff 
$X = \{\langle k_1,\dots,k_n \rangle \in \omega^n : \T \vdash \phi(k_1,\dots,k_n)\}$,
and \emph{binumerated} by $\phi$ in $\T$ if $\lnot \phi$ also numerates 
the complement of $X$. A relation $X$ is \textit{correctly numerated} 
by $\phi$ if $\phi$ numerates $X$ and for all $k_1,\dots,k_n$, 
$\T \vdash \phi(k_1,\dots,k_n)$ iff $\phi(k_1,\dots,k_n)$ is true. 
A function $f$ is \emph{strongly representable} in $\T$ iff there is 
a formula $\phi(x_1,\dots,x_n,y)$ that numerates $f(x_1,\dots,x_n) = y$ 
in $\T$, and moreover, if $f(k_1,\dots,k_n) = m$, then 
$\T \vdash \forall y (\phi(k_1,\dots,k_n,y) \rightarrow y = m)$.

Given a formula $\tau(z)$, let $\prf_\tau(x,y)$ be a formula expressing 
``$y$ is a proof of the formula $x$ from the set of sentences satisfying 
$\tau(z)$''.
Let $\pr_\tau(x)$ be the formula $\exists y \prf_\tau(x,y)$, 
and let $\con_\tau$ be the sentence $\lnot \pr_\tau(\ulcorner \bot \urcorner)$. 
Whenever $\tau(z)$ is $\Sigma_{n+1}$,  $\pr_\tau(x)$ is equivalent to a 
$\Sigma_{n+1}$ formula in the real world. For any formula $\tau(x)$, the notation
$(\tau + z)(x)$ is used as shorthand for $\tau(x) \lor x = z$. This convention
is used in expressions such as $\prf_{\tau + z}(x)$.

Models of arithmetic are denoted $\mathcal{M}$, etc., while the respective
domains are denoted $M$, etc. The standard model is denoted $\mathbb{N}$ 
and its domain is $\omega$. If $\mathcal{M}$ is non-standard, then the \emph{standard
system} of $\mathcal{M}$, $\SSy(\mathcal{M})$, is the collection of sets 
$X \subseteq \omega$ such that for some $a \in M$, 
$X = \{ n \in \omega : \mathcal{M} \models n\varepsilon a\}$
Then $X$ is \emph{coded} in $\mathcal{M}$, and $a$ is a \emph{code for} $X$. 

A relation $X \subseteq M^n$ is $\Gamma$\emph{-definable in} $\mathcal{M}$ 
(with parameters) iff there is a tuple $\overline{b}\in M$ and a formula 
$\phi(x_1,\dots,x_n, \overline{y}) \in \Gamma$ such that 
$X = \{\langle m_1,\dots,m_n \rangle \in M^n : \mathcal{M} \models \phi(m_1,\dots,m_n,\overline{b})\}$.
Whenever this terminology is used without specifying a model $\mathcal{M}$, it is assumed that $\mathcal{M} = \mathbb{N}$.
Thus, in particular, a $\Gamma$\emph{-definition} of a theory $\T$ is a $\Gamma$ formula 
$\tau(x)$ such that $\T = \{ \phi : \mathbb{N} \models \tau(\ulcorner \phi \urcorner) \}$.

For each $\Gamma$, $\Th_\Gamma(\mathcal{M})$ is the set of 
$\Gamma$ sentences true in $\mathcal{M}$, that is, the set 
$\{ \phi \in \Gamma : \mathcal{M} \models \phi \}$.
If $\mathcal{M}$ is a submodel of $\mathcal{N}$, and for all 
$\overline{a} \in M$ and $\gamma(\overline{x}) \in \Gamma$, $\mathcal{M} \models \gamma(\overline{a})$ 
iff $\mathcal{N} \models \gamma(\overline{a})$, then $\mathcal{N}$ is a 
$\Gamma$\emph{-elementary extension of} $\mathcal{M}$. If $\mathcal{M} \models \sigma$ 
for some $\Sigma_{n+1}$ sentence $\sigma$, and $\mathcal{N}$ is a 
$\Sigma_n$-elementary extension of $\mathcal{M}$, then $\mathcal{N} \models \sigma$.
$\mathcal{N}$ is an \emph{end-extension of} $\mathcal{M}$ (or, equivalently, 
$\mathcal{M}$ is an \emph{initial segment of} $\mathcal{N}$) iff $\mathcal{M}$ 
is a submodel of $\mathcal{N}$, and whenever $a \in M$ and $b \in N$, and 
$\mathcal{N}\models b < a$, then $b \in M$.

Let $\emptyset^{(n)}$ denote the $n$th Turing jump of the empty set 
(see, e.g.~\citet[p.~254]{rogers1967}).
For each $n$, let $\langle \varphi_i^n : i \in \omega\rangle$ be an 
acceptable (in the sense of \citet[Excercise 2.10]{rogers1967}) 
enumeration of the functions 
that are recursively enumerable (r.e.) in $\emptyset^{(n)}$; these 
are the partial $n$-recursive functions.
For each partial $n$-recursive function $\varphi_e^n$, let the $e$th 
$n$-r.e.\ set $W_e^n$ be the domain of $\varphi_e^n$. If $f$ is a function 
such that $f \simeq \varphi_e^n$ for some $e$, then $e$ is an $n$\emph{-index for} $f$.

\section{Preliminary principles}\label{section:principles}
This section presents a number of suitably formulated principles that are
useful in proving hierarchical incompleteness results of the kind given 
in Theorem~\ref{theorem:1}. 
The basic versions of these principles can be found scattered across
the literature, and none of them should 
be too surprising to the reader familiar with, e.g., \citet{hajek_pudlak1993},
\citet{kaye1991}, \citet{lindstrom2003}, and \citet{smorynski1985}.

The essence of the generalisations presented in this paper is that
r.e.\ theories are replaced by $\Sigma_{n+1}$-definable ones, and that 
the base theory is pushed down as far as it will go below $\pa$. On some 
occasions the theories have to satisfy additional constraints such as 
$\Sigma_n$-soundness or $\Pi_n$-completeness for the generalisation to go through,
and whenever this is the case that will be pointed out explicitly.

\subsection{Basics, representability, recursion theory}
Many textbooks in metamathematics rely on some version of the fact
that Robinson's arithmetic $\tq$ is $\Sigma_1$-complete: it proves
all true $\Sigma_1$ sentences. This has the consequence that
every r.e.\ set can be numerated in $\tq$ by a $\Sigma_1$ formula,
which in turn allows for representing the theorem set of $\tq$ in
$\tq$ and proving the first incompleteness theorem.
Many proofs of these facts rely on $\Delta_0$ being closed under
bounded quantification, and $\tq$ deciding all $\Delta_0$ sentences.

Since the aim of this paper is to generalise incompleteness results to
 $\Sigma_{n+1}$-definable theories rather than r.e. ones, there is a need 
 for establishing a similar correspondence between $\Sigma_{n+1}$-definable 
 sets and theories sufficient to represent them.
 On these higher levels, the 
 roles of $\Delta_0$ and $\tq$ are played by
 the classes $\Sigma_0(\Sigma_{n})$ and the theories $\ISn + \mathrm{Th}_{\Pi_n}(\mathbb{N})$, respectively. Establishing this relationship is the goal of this subsection.

The following four observations 
(see, e.g.~\citet[Lemma 0.3]{smorynski1977b}, \citet[Lemma 2.2]{hajek1977},
\citet[Lemma 2.9]{beklemishev2005}, \citet[Proposition 3.7]{kikuchi_kurahashi2017})
serves as the starting point for
the analogy between $\tq$ and $\ISn + \mathrm{Th}_{\Pi_n}(\mathbb{N})$.

\begin{fact}[Basic properties of soundness and completeness]\label{fact:soundness}
 \mbox{}
 \begin{enumerate}
  \item $\T$ is consistent iff $\T$ is $\Sigma_0$-sound.
  \item $\T$ is $\Pi_n$-complete iff $\T$ is $\Sigma_{n+1}$-complete.
  \item $\T$ is $\Sigma_n$-sound iff $\T$ is $\Pi_{n+1}$-sound iff 
        $\T + \Th_{\Sigma_{n+1}}(\mathbb{N})$ is consistent.
  \item If $\T$ is consistent and $\Pi_n$-complete, then $\T$ is $\Sigma_n$-sound.
   \end{enumerate}
\end{fact}

While the usual formulation of Gödel's theorem pertains to r.e., consistent theories, 
the hierarchically formulated Theorem \ref{theorem:1} is stated for $\Sigma_{n+1}$-definable,
$\Sigma_n$-sound theories. In light of the preceding fact, consistency and $\Sigma_0$-soundness
are equivalent, so the ordinary statement fits nicely into the hierarchical statement of the theorem.
However, inspection of the published proofs of Theorem \ref{theorem:1} 
reveals that $\Pi_n$-complete theories enter the argument in an indispensable way. In
the proof by \citet{kikuchi_kurahashi2017}, $\Pi_n$-completeness of $\T$ can be explicitly
assumed, since in the case where $\T$ is not $\Pi_n$-complete, it fails to prove all true $\Pi_n$ sentences and can hardly prove
all true $\Pi_{n+1}$ sentences. By contrast, the proof by \citet{salehi_seraji2017}
bypasses assuming $\Pi_n$-completeness of $\T$ by instead constructing a $\Pi_{n+1}$ sentence
that is independent of the $\Pi_n$-complete $\T + \mathrm{Th}_{\Pi_n}(\mathbb{N})$, a
theory whose consistency is guaranteed by the $\Sigma_n$-soundness of $\T$. That sentence
is, \emph{a fortiori}, also independent of $\T$.

The most conservative generalisation of ``r.e., consistent'' is therefore ``$\Sigma_{n+1}$-definable, $\Pi_n$-complete, and consistent'' rather than ``$\Sigma_{n+1}$-definable, $\Sigma_n$-sound''. 
As is clear
from the Theorem \ref{theorem:1}, the assumption of $\Pi_n$-completeness of $\T$ is sometimes excessively strong, and mere $\Sigma_n$-soundness is indeed enough for some further applications as well. 
In those cases, the proofs rely on the consistency of  $\T + \mathrm{Th}_{\Pi_n}(\mathbb{N})$, as in the proof by \citet{salehi_seraji2017}. 
In other cases, however, the $\Pi_n$-completeness is indispensable for the generalisation to go through.\footnote{One of the referees pointed out that even $\Pi_n$-completeness is sometimes not enough for a straightforward hierarchical generalisation to hold: see, e.g., Theorem 11 of \citet{kurahashi2018} for an example.}

The analogy with regards to soundness and completeness therefore goes as follows. 
Since $\tq$ is $\Pi_0$-complete, it is also 
$\Sigma_{1}$-complete, and since it is also consistent it follows that 
$\tq$ is $\Sigma_0$-sound and therefore also $\Pi_1$-sound. Now consider 
$\mathrm{Th}_{\Pi_n}(\mathbb{N})$: This theory is $\Pi_n$-complete and 
therefore also $\Sigma_{n+1}$-complete; it is consistent and 
therefore $\Pi_{n+1}$-sound.

The role of $\ISn$ in the analogy between
$\tq$ and $\ISn + \Th_{\Pi_n}(\mathbb{N})$ becomes clear through the next fact. Its proof
will take up most of the remainder of this subsection.

\begin{fact}\label{fact:definability}
\mbox{}

 \begin{enumerate}
 \item Every $\Sigma_n$- (or $\Pi_n$-) definable relation on $\omega$ is binumerated
        by a $\Sigma_n$ (or $\Pi_n$) formula in 
        $\tq + \Th_{\Pi_n}(\mathbb{N})$.\label{fact:definability_one}
\item Every $\Sigma_{n+1}$-definable relation on $\omega$ is 
    numerated 
        by a $\Sigma_{n+1}$ formula in $\ISn + \Th_{\Pi_n}(\mathbb{N})$.\label{fact:definability_two}
    \item Every function from $\omega^k$ to $\omega$ that is recursive in $\emptyset^{(n)}$ is strongly 
        representable by a $\Sigma_{n+1}$ formula (with particularly nice properties)
        in $\mathrm{I}\Sigma_{n} + \mathrm{exp} + \mathrm{Th}_{\Pi_n}(\mathbb{N})$.\footnote{As is well known, 
$\ISo$ proves the totality of the exponential function, so the additional axiom 
$\mathrm{exp}$ is only required in the case $n=0$ to make sure that the partial 
satisfaction predicates used to establish the particularly nice properties are well-behaved. A similar remark applies also to many of 
the following facts, and to the statements of some of the theorems in 
Section \ref{section:applications}. }
\label{fact:definability_three}
\end{enumerate}
\end{fact}
 
  The first item is immediately seen to be true, since 
  $\tq + \Th_{\Pi_n}(\mathbb{N})$ is consistent, $\Sigma_{n+1}$-complete, 
  and $\Pi_{n+1}$-sound. The second and third items are elaborated on below,   
but first we need a few more stepping stones to help out in the constructions.
  
\begin{fact}[Parametric diagonal lemma]
\label{fact:diagonal}
\mbox{}
\begin{enumerate}
\item For every $\Gamma$ formula $\gamma(x,y)$, we can effectively 
find a $\Gamma$ formula $\xi(x)$ such that
 \[
  \tq \vdash \forall x (\xi(x) \leftrightarrow \gamma(x,\ulcorner \xi\urcorner))\text{.}
 \]
 \item For every $\Gamma$ formula $\gamma(x,y)$, we can effectively 
 find a $\Gamma$ formula $\xi(x)$ such that, for each $k$,
 \[
  \tq \vdash \xi(k) \leftrightarrow \gamma(k,\ulcorner \xi(k)\urcorner))\text{.}
 \]
 \end{enumerate}

\end{fact}

\begin{bibl}
{Item 1.~of the above is essentially due to \citet[Lemma 1]{montague1962}.
The proof by \citet[Lemma~1]{ehrenfeucht_feferman1960} of 2.~actually 
suffices to prove 1.; see also \citet{smorynski1981} for a discussion of 
the development of the diagonal lemma.}
\end{bibl}

\begin{fact}[Generalised Craig's trick]\label{fact:craig}
For any $\Sigma_{n+1}$-definable theory $\T$, defined by a $\Sigma_{n+1}$ formula $\sigma(z)$, there 
  is a deductively equivalent theory $\s$, defined by a formula $\pi(x)$ 
  that is $\Pi_n$ in $\ISn$. Moreover,   
  $\ISnp \vdash \forall x (\pr_\sigma(x) \leftrightarrow \pr_\pi(x))$.\footnote{For $n=0$, 
  it is known that $\ISo$ suffices to show that the Craigified theory is deductively 
  equivalent to the original one \citep[Remark III.2.30]{hajek_pudlak1993}. 
  By contrast, this is not true of $\ID$ \citep{visser2015}.}
  \end{fact}

\begin{bibl}
{Craig's \citeyearpar{craig1953} formulation pertains to r.e.\ theories, 
for which there exist deductively equivalent theories with primitive recursive 
definitions. An early 
hierarchical generalisation is due to \citet[2.2.C]{grzegorczyk_etal_1958}.
The hierarchical formulation presented here follows by inspection of a proof by 
Kurahashi \citeyearpar[Proposition 9]{kurahashi2018}.}
\end{bibl}

\begin{conv}
{In light of Craig's trick, we may assume that any $\Sigma_{n+1}$-definable
theory is in fact $\Pi_n$-defined. In what follows, we write $\prf_\T(x,y)$ 
to denote (ambiguously) any formula $\prf_\tau(x,y)$ where $\tau$ is a 
$\Pi_n$ binumeration of $\T$ in $\ISn + \mathrm{Th}_{\Pi_n}(\mathbb{N})$, 
and moreover, this formula can be assumed to be $\Pi_n$. Consequently,
$\Pr_\T(x)$ is $\Sigma_{n+1}$ and $\con_\T$ is $\Pi_{n+1}$ in $\ISn$.}
\end{conv}

  \begin{fact}[{\citealp[Lemmata I.2.9, I.2.14, and Theorem I.2.25]{hajek_pudlak1993}}]
 \mbox{}
 \label{fact:closure}
\begin{enumerate}
 \item In $\ISn$, both $\Sigma_n$ and $\Pi_n$ are closed under bounded 
        quantifiers.\footnote{In fact, only the weaker principle of $\Sigma_n$-\emph{collection} is 
        required for this first item. However, collection plays no prominent 
        role elsewhere in this paper.}
 \item $\ISn \vdash \ISz(\Sigma_n)$.
 \item Each $\Sigma_0(\Sigma_n)$ formula is $\Delta_{n+1}$ in $\ISn$.
\end{enumerate}
\end{fact}

All the pieces are now in place to prove a, sometimes useful, lemma from which
Fact \ref{fact:definability}.\ref{fact:definability_two}
follows immediately. The proof highlights the steps where induction 
and the additional truth from the standard model is required. 
For the statement of the lemma, recall that a relation $X$ is correctly numerated
by $\phi$ in $\T$ if $\phi$ numerates $X$ in $\T$, and for all $k_1,\dots,k_n$, 
$\T \vdash \phi(k_1,\dots,k_n)$ iff $\phi(k_1,\dots,k_n)$ is true.

 \begin{lem}\label{lemma:numerability}
Let $\T$ be any $\Sigma_{n+1}$-definable, $\Pi_n$-complete, and consistent extension of 
$\ISn$ 
and let $R(x_1,\dots,x_m)$ 
be any $\Sigma_{n+1}$-definable relation. There is a $\Sigma_{n+1}$ 
formula $\phi(x_1,\dots,x_m)$ that correctly numerates $R$ in $\T$.
 \end{lem}
  
 \begin{proof}
   Let $\rho(x_1,\dots,x_m)$ define $R$. We may assume that 
   $\rho(x_1,\dots,x_m)$ is of the form $\exists z \pi(x_1,\dots, x_m,z)$  
   with $\pi \in \Pi_n$. Let $\phi(x_1,\dots,x_m)$ be such that, for all 
   $k_1,\dots,k_m$, $\ISn + \mathrm{Th}_{\Pi_n}(\mathbb{N})$ proves
  \[
    \phi(k_1,\dots,k_m) \leftrightarrow \exists z (\pi(k_1,\dots,k_m,z) \land \forall y\leq z \lnot \mathrm{Prf}_\T(\ulcorner \phi(k_1,\dots,k_m) \urcorner, y))\text{.}
  \]
  Recall that if $\T$ is $\Sigma_{n+1}$-definable, then there is a deductively 
  equivalent $\Pi_n$ definition of $\T$, and $\mathrm{Prf}_\T(x,y)$ is 
  therefore equivalent to a $\Pi_n$ formula in $\ISn$. Then
  \[
  \pi(k_1,\dots,k_m,z) \land \forall y\leq z \lnot \mathrm{Prf}_\T(\ulcorner \phi(k_1,\dots,k_m) \urcorner, y)
  \]
  is a $\Sigma_0(\Sigma_n)$ formula, and is therefore, by Fact \ref{fact:closure},
  equivalent to a $\Delta_{n+1}$ formula in $\ISn$. It follows that $\phi$ is 
  equivalent to a $\Sigma_{n+1}$ formula in $\ISn$.

  Suppose $R(k_1,\dots,k_m)$. Then $\mathbb{N} \models \rho(k_1,\dots,k_m)$, 
  so there is then an $i$ such that $\mathbb{N} \models \pi(k_1,\dots,k_m,i)$. 
 Since $\pi$ is $\Pi_n$ and $\mathrm{Th}_{\Pi_n}(\mathbb{N})$ is $\Pi_n$-complete, it follows that
  $\ISn + \mathrm{Th}_{\Pi_n}(\mathbb{N}) \vdash \pi(k_1,\dots,k_m,i)$ .
  Suppose, for a contradiction, that 
  $\T \nvdash \phi(k_1,\dots,k_m)$. By Fact \ref{fact:definability}.\ref{fact:definability_one}, 
  \[
  \ISn + \mathrm{Th}_{\Pi_n}(\mathbb{N}) \vdash \lnot \mathrm{Prf}_\T(\ulcorner \phi(k_1,\dots,k_m)\urcorner,p)
  \]
  for all $p$.
  It follows that 
  \[
  \ISn + \mathrm{Th}_{\Pi_n}(\mathbb{N}) \vdash \forall y \leq i \lnot \mathrm{Prf}_\T(\ulcorner \phi(k_1,\dots,k_m) \urcorner, y)\text{,}
  \]
  so $\ISn + \mathrm{Th}_{\Pi_n}(\mathbb{N}) \vdash \phi(k_1,\dots,k_m)$, 
  and $\T \vdash \phi(k_1,\dots,k_m)$.

  Conversely, suppose $\T \vdash \phi(k_1,\dots,k_m)$, and let $p$ be a proof 
  of $\phi(k_1,\dots,k_m)$ in $\T$. Then 
  $\ISn + \mathrm{Th}_{\Pi_n}(\mathbb{N}) \vdash \mathrm{Prf}_\T(\ulcorner \phi(k_1,\dots,k_m)\urcorner, p)$.
  It follows that 
  \[
  \ISn + \mathrm{Th}_{\Pi_n}(\mathbb{N}) \vdash \forall y \leq z\lnot \mathrm{Prf}_\T(\ulcorner \phi(k_1,\dots,k_m)\urcorner, y) \rightarrow z < p\text{.}
  \]
  Suppose, for a contradiction, that $\lnot R(k_1,\dots,k_m)$. Then for all $i$, 
  \[
  \ISn + \mathrm{Th}_{\Pi_n}(\mathbb{N}) \vdash \lnot \pi(k_1,\dots,k_m,i)\text{.}
  \]
  It follows that $\ISn + \mathrm{Th}_{\Pi_n}(\mathbb{N}) \vdash \lnot \exists z < p \pi(k_1,\dots,k_m,z)$. 
  We get 
  \[
  \ISn + \mathrm{Th}_{\Pi_n}(\mathbb{N}) \vdash \lnot \exists z (\pi(k_1,\dots,k_m, z) \land \forall y \leq z \lnot \mathrm{Prf}_\T(\ulcorner \phi(k_1,\dots,k_m)\urcorner, y))\text{,}
  \]
  whence $\ISn + \mathrm{Th}_{\Pi_n}(\mathbb{N}) \vdash \lnot \phi(k_1,\dots,k_m)$, 
  and $\T \vdash \lnot \phi(k_1,\dots,k_m)$.
  
  It remains to show that $\T \vdash \phi(k_1,\dots,k_m)$ iff $\phi(k_1,\dots,k_m)$ 
  is true. The implication from right to left is trivial, since $\phi$ is 
  $\Sigma_{n+1}$ in $\ISn$ and $\mathrm{Th}_{\Pi_n}(\mathbb{N})$ is 
  $\Sigma_{n+1}$-complete.  For the other direction, suppose 
  $\T \vdash \phi(k_1,\dots,k_m)$, and let $p$ be the least such proof. Then 
  \[
  \ISn + \mathrm{Th}_{\Pi_n}(\mathbb{N}) \vdash \forall y \leq z\lnot \mathrm{Prf}_\T(\ulcorner \phi(k_1,\dots,k_m)\urcorner, y) \rightarrow z < p\text{.}
  \]

  Suppose further that there is no $i < p$ such that $\pi(k_1,\dots,k_m,i)$. 
  Then, as before, $\T \vdash \lnot \phi(k_1,\dots,k_m)$, a contradiction. 
  Thus there is an $i < p$ such that $\pi(k_1,\dots,k_m,i)$ is true. Since 
  $p$ is minimal, 
  $\forall y \leq i \lnot \mathrm{Prf}_\T(\ulcorner \phi(k_1,\dots,k_m)\urcorner,y)$, so 
  \[
  \exists z (\pi(k_1,\dots,k_m,z) \land \forall y\leq z \lnot \mathrm{Prf}_\T(\ulcorner \phi(k_1,\dots,k_m) \urcorner, y))\]
  is true, and therefore $\phi(k_1,\dots,k_m)$ is true, as desired.
 \end{proof}

 \begin{bibl}
 {The correct representability of $\Sigma_1$ relations in $\tq$ is due to 
 \cite{ehrenfeucht_feferman1960}. The proof above mimics Lindstr\"om's 
 \citeyearpar{lindstrom2003} version of Shepherdson's \citeyearpar{shepherdson1961}
 proof of the same result.}
   \end{bibl}

Fact \ref{fact:definability}.\ref{fact:definability_two} follows directly from Lemma 
 \ref{lemma:numerability}, and    
it only remains to give an argument for Fact \ref{fact:definability}.\ref{fact:definability_three}. 
For this we need two more facts, the first being a version of Post's theorem.

\begin{fact}[\citealp{post1948}]
\label{fact:post}
  A relation is $\Sigma_{n+1}$ iff it is r.e.\ in $\emptyset^{(n)}$.
\end{fact}

\begin{fact}[The selection theorem]
 \label{fact:selection}
  For each $\Sigma_{n+1}$ formula $\phi$ with exactly the variables 
  $x_1,\dots,x_k$ free, there is a   $\Sigma_{n+1}$ formula $\mathrm{Sel}\{\phi\}$ 
  with exactly the same free variables, such that:
  \begin{enumerate}
\item $\mathrm{I}\Sigma_n \vdash \forall
  x_1,\dots,x_k(\mathrm{Sel}\{\phi\}(x_1,\dots,x_k) \rightarrow
  \phi(x_1,\dots,x_k))$;
 \item $\mathrm{I}\Sigma_n \vdash \forall x_1,\dots,x_k,z(\mathrm{Sel}\{\phi\}(x_1,\dots,x_k) \land\mathrm{Sel}\{\phi\}(x_1,\dots,x_{k-1},z) \rightarrow x_k = z)$;
 
\item $\mathrm{I}\Sigma_n \vdash \forall x_1,\dots,x_{k-1} (\exists x_k \phi(x_1,\dots,x_k) \rightarrow \exists x_k \mathrm{Sel}\{\phi\}(x_1,\dots,x_k))$.
\end{enumerate}
\end{fact}
\begin{proof}[Proof sketch]
We may assume that any given $\Sigma_{n+1}$ formula $\phi(x_1,\dots,x_k)$ is on the form 
$\exists w \pi(x_1,\dots,x_k,w)$ with $\pi \in \Pi_n$. Let $(w)_i$ denote the $i$th element of
the ordered pair coded by $w$, and define $\mathrm{Sel}\{\phi\}(x_1,\dots,x_k)$ 
to be the formula
\[
 \exists w (\pi(x_1,\dots,x_{k-1},(w)_0,(w)_1) \land \forall u \leq w \lnot \pi(x_1,\dots,x_{k-1},(u)_0,(u)_1) \land (w)_0 = x_k)\text{.}
\]
By Fact \ref{fact:closure}, this formula is equivalent in $\ISn$ to a 
$\Sigma_{n+1}$ formula. Using $\Sigma_n$-induction, it is easy to
check that it has the three desired properties.
\end{proof}

 \begin{bibl}
{\citet[Theorem 0.6.9]{smorynski1985} provides a proof of the selection 
theorem for $n=0$, and the generalisation is straightforward. The numeration 
below of partial recursive functions is also based on the treatment in Section~0 of 
his book.}
 \end{bibl}

Fact \ref{fact:definability}.\ref{fact:definability_three} can now be established as follows:
Let $f$ be a $k$-ary partial $n$-recursive function; then the relation 
$f(x_1,\dots,x_k) = y$ is $\Sigma_{n+1}$ by Fact \ref{fact:post}.
By Fact \ref{fact:definability}.\ref{fact:definability_two}, this relation is 
correctly numerated in $\ISn + \mathrm{Th}_{\Pi_n}(\mathbb{N})$ by some 
$\Sigma_{n+1}$ formula $\psi(x_1,\dots,x_k,y)$.
Finally, by Fact \ref{fact:selection}, $f$ is strongly represented in 
$\ISn + \mathrm{exp} + \mathrm{Th}_{\Pi_n}(\mathbb{N})$ by the $\Sigma_{n+1}$ formula 
$\mathrm{Sel}\{\Sat_{\Sigma_{n+1}}\}(\ulcorner \psi \urcorner, x_1,\dots,x_k, y)$.
This concludes the proof of Fact \ref{fact:definability}.

\begin{conv}
{The partial $n$-recursive function with index $e$, $\varphi_e^n$, can now be 
defined to be the function whose graph is defined by 
$\mathrm{Sel}\{\Sat_{\Sigma_{n+1}}\}(e, y_1, \dots, y_k, z)$ in $\mathbb{N}$. 
The resulting enumeration is acceptable in the sense of \citet{rogers1967}. 
Whenever convenient, $\varphi_e^n(m_1,\dots,m_i) = k$ is used as a shorthand 
for $\mathrm{Sel}\{\Sat_{\Sigma_{n+1}}\}(e, m_1,\dots, m_i, k)$.}
\end{conv}

To wrap up this subsection, we sketch a proof of a formalised version of the second recursion theorem.

\begin{fact}[Formalised second recursion theorem]\label{fact:recursion}
 Let $f : \omega^2 \to \omega$ be $n$-recursive. There is an $e \in \omega$ such that
 \[
  \ISn + \mathrm{exp} + \mathrm{Th}_{\Pi_n}(\mathbb{N}) \vdash \varphi^n_e(y) \simeq f(e,y)\text{.}
 \]
\end{fact}

\begin{proof}
 Let, by Fact \ref{fact:definability}.3, $\psi(x,y,z)$ be a $\Sigma_{n+1}$ formula 
 strongly representing $f$ in $\ISn + \mathrm{exp} + \mathrm{Th}_{\Pi_n}(\mathbb{N})$. 
Let, by Fact \ref{fact:diagonal}, $\gamma(y,z)$ be a formula such that
 \[
  \ID \vdash \gamma(y,z) \leftrightarrow \mathrm{Sel} \{ \mathrm{\Sat_{\Sigma_{n+1}}} \} (\ulcorner \psi \urcorner, \ulcorner \gamma \urcorner, y,z)
 \]
Then $e= \ulcorner \gamma \urcorner$ is as desired.
\end{proof}

The recursion theorem is usually deployed in the following manner. Define an 
$n$-recursive function $f(z,x)$ in stages, using $z$ as a parameter; the 
resulting function may differ depending on the choice of $z$. By the recursion theorem, 
there is then an $n$-index $e$ such that $\varphi_e^n(x)$ coincides with $f(e,x)$. 
This legitimates self-referential constructions where an index of $f$ is being 
used in the construction of $f$ itself.

 \begin{bibl}
{The recursion theorem is due to \citet[Theorem XXVII]{kleene1952}. 
\citet[Theorem 0.6.12]{smorynski1985} shows how it can be formalised in 
(essentially) $\ID$ for ($0$-)recursive functions.}
 \end{bibl}
   
 \subsection{Strong provability predicates}
 A central piece in the proof of Gödel's incompleteness theorem for r.e.\
 theories $\T$ is the use of formal provability predicates $\pr_\T(x)$, expressing 
 ``$x$ is provable in $\T$''.
 In the current setting, with $\Sigma_{n+1}$-definable theories in focus, the 
 corresponding \emph{strong provability predicates} $\Pr_{\T,\Sigma_{n+1}}(x)$ express ``$x$ is provable in $\T$ from true $\Sigma_{n+1}$ sentences''.\footnote{Similar 
provability predicates, $\Pr_{\T,\Pi_n}(x)$ appear in the literature under the 
name strong provability, $n$-provability, and oracle provability 
\citep{ignatiev1993,beklemishev2005,visser2015,kolmakov_beklemishev2019}.
The difference is usually only a 
matter of taste, since the theories $\T + \mathrm{Th}_{\Pi_n}(\mathbb{N})$ and 
$\T + \mathrm{Th}_{\Sigma_{n+1}}(\mathbb{N})$ are deductively equivalent, and 
under reasonable assumptions this is reflected also in the formalised notions 
$\pr_{\T,\Pi_{n}}(x)$ and $\pr_{\T,\Sigma_{n+1}}(x)$. However, the relationship 
between the related notions $\pr^k_{\T,\Pi_{n}}(x)$ and $\pr^k_{\T,\Sigma_{n+1}}(x)$ 
(introduced below) is not as immediate, since there $k$ bounds the length of the proof 
of $x$ and the Gödel number of the additional true $\Pi_n$ or $\Sigma_{n+1}$ 
sentence used in the proof. Even though every $\Sigma_{n+1}$-provable sentence 
has a $\Pi_n$-proof, this proof might be much longer than the original one. We 
opt for the $\Sigma_{n+1}$ versions, since it helps in the proof Fact \ref{fact:OHGL} 
below, and since it makes (some of) the indices align nicely.}
  To define these predicates, we first need to introduce partial satisfaction predicates.

\begin{fact}[Partial satisfaction predicates]\label{fact:satisfaction}
 For each $k$ and $\Gamma$, there is a $k+1$-ary $\Gamma$ formula 
 $\Sat_{\Gamma}(x,x_1,\dots,x_k)$ such that for every $\Gamma$ formula 
 $\phi(x_1,\dots,x_k)$,
 \[
  \ID \vdash \forall x_1,\dots,x_k(\phi(x_1,\dots,x_k) \leftrightarrow \Sat_{\Gamma}(\ulcorner \phi \urcorner, x_1,\dots,x_k))\text{.}
 \]
  Hence there is also a $\Gamma$ formula $\tr_\Gamma(x)$ such that for 
  every $\Gamma$ sentence $\phi$,
 \[
  \ID \vdash \phi \leftrightarrow \tr_\Gamma(\ulcorner \phi \urcorner)\text{.}
 \]
\end{fact}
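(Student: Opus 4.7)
The plan is to build the satisfaction predicates by induction on $n$, starting with a $\Delta_0$-satisfaction predicate and then climbing the hierarchy by prepending quantifier blocks. First I would construct, for each fixed $k$, a $\Delta_0$ formula $\Sat_{\Delta_0}(x, x_1, \dots, x_k)$ that expresses ``$x$ codes a $\Delta_0$ formula $\phi(v_1, \dots, v_k)$ (possibly with vacuous variables) and, upon substituting $x_i$ for $v_i$, $\phi$ evaluates to true''. The evaluation is by recursion on subformulas: one looks for a sequence coding the truth values of all subformulas of $\phi$ at the given parameters, consistent with the atomic, Boolean, and bounded-quantifier clauses. Provided the code of such a sequence exists and is bounded in terms of $\ulcorner \phi\urcorner$ and the parameters, one can quantify over it by a bounded quantifier. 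This is where the exponentiation axiom is used: $\ID$ proves the total existence of the relevant codes, so $\Sat_{\Delta_0}$ can be formalised as a $\Delta_0$ formula and its correctness for each specific $\phi$ is provable in $\ID$ by external induction on the subformula structure of $\phi$.

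Next, for $\Gamma = \Sigma_{n+1}$ or $\Pi_{n+1}$, I would define
\[
 \Sat_{\Sigma_{n+1}}(x, x_1, \dots, x_k) := \exists y_1 \dots \exists y_\ell\, \Sat_{\Pi_n}(\mathrm{body}(x), y_1, \dots, y_\ell, x_1, \dots, x_k),
\]
and symmetrically for $\Pi_{n+1}$, where $\mathrm{body}(x)$ is an elementary function extracting the G\"odel number of the matrix underneath the leading quantifier block coded in $x$. After prenexing the existential quantifiers out past the inner $\Sat_{\Pi_n}$, the resulting formula sits properly in $\Sigma_{n+1}$. The correctness schema
\[
 \ID \vdash \forall \bar{x}\,\bigl(\phi(\bar{x}) \leftrightarrow \Sat_{\Gamma}(\ulcorner \phi\urcorner, \bar{x})\bigr)
\]
is then proved, for each concrete $\phi \in \Gamma$, by unwinding the definition and invoking the inductively established correctness of $\Sat_{\Pi_n}$ (or $\Sat_{\Sigma_n}$) for the matrix of $\phi$. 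This is external induction on the complexity of $\phi$, not induction inside $\ID$, so no instance of induction beyond $\Delta_0$ plus totality of exponentiation is needed at any stage.

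Finally, the displayed consequence about $\tr_\Gamma$ is obtained by specialising to $k = 0$: set $\tr_\Gamma(x) := \Sat_\Gamma(x)$ (the $0$-parameter version), and the schema collapses to the required bi-implication $\ID \vdash \phi \leftrightarrow \tr_\Gamma(\ulcorner \phi \urcorner)$ for each $\Gamma$-sentence $\phi$.

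The main obstacle is the base case, the construction of $\Sat_{\Delta_0}$ in $\ID$: one has to exhibit, as a single $\Delta_0$ formula, a bounded search for a coded evaluation sequence, and then prove in $\ID$ that this search succeeds and returns the correct value on every $\Delta_0$ input. This is exactly the point at which the totality of exponentiation is essential; over $\mathrm{I}\Delta_0$ alone one could not guarantee that the relevant codes exist. Once $\Sat_{\Delta_0}$ and its correctness schema are in hand, the inductive step up the arithmetical hierarchy is a routine manipulation of prenex forms.
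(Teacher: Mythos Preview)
The paper does not supply a proof of this fact: it is stated as a background ``Fact'' with a bibliographical remark pointing to Hilbert--Bernays and, for modern treatments, to Kaye and H\'ajek--Pudl\'ak. Your sketch is precisely the standard construction carried out in those references---a $\Delta_0$ (more carefully: provably $\Delta_1$ in $\ID$, or $\Delta_0$ in the language expanded by an exponential function symbol) truth predicate built via a bounded search for an evaluation sequence, followed by an external induction up the arithmetical hierarchy by adjoining quantifier blocks---so there is nothing to contrast methodologically.

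One technical point deserves tightening. In your displayed definition of $\Sat_{\Sigma_{n+1}}(x,\bar x)$ you write $\exists y_1\cdots\exists y_\ell$ with $\ell$ the length of the leading existential block of the formula coded by $x$; but $\ell$ varies with $x$, so this is not a single first-order formula. The usual repair is either to normalise every $\Sigma_{n+1}$ formula (at the level of G\"odel numbers) to the form $\exists y\,\pi$ with a single existential via pairing, or---equivalently---to build the satisfaction predicate with a single sequence-valued parameter $s$ and recover the $(k{+}1)$-ary version as $\Sat_\Gamma(x,\langle x_1,\dots,x_k\rangle)$. With that adjustment your argument goes through and matches the cited sources.
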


\begin{bibl}
{Modern proofs of this Fact are due to \citet{kaye1991} and \citet{hajek_pudlak1993}. The use of partial satisfaction predicates, however, goes back to \citet{hilbert_bernays1934}. 
}
\end{bibl}

\begin{definition}
Let, for each $\Gamma$, $\prf_{\T,\Gamma}(x,y)$ be the formula 
\[
\exists z(z  \in \Gamma  \land \tr_{\Gamma}(z) \land \prf_{\T+z}(x,y))\text{.}
\] 
Let $\pr_{\T,\Gamma}(x) \defeq \exists y \prf_{\T,\Gamma}(x,y)$, and let 
$\con_{\T,\Gamma} \defeq \lnot \pr_{\T,\Gamma}(\ulcorner \bot \urcorner)$.
\end{definition}
We are mainly interested in the predicates $\pr_{\T,\Sigma_{n+1}}(x)$.
These provability predicates 
have many similarities with the usual provability predicate. Firstly, if $\T$ 
is $\Sigma_{n+1}$-definable, then $\pr_{\T,\Sigma_{n+1}}(x)$ is $\Sigma_{n+1}$, 
and $\con_{\T,\Sigma_{n+1}}$ consequently $\Pi_{n+1}$. Secondly, they satisfy 
provable $\Sigma_{n+1}$-completeness and Löb's derivability conditions. These 
notions also have their roots with \citet{hilbert_bernays1934}; see also \citet[Theorem 5.4 and Corollary 5.5]{feferman1960}. The 
versions presented here are extracted from \citet[Propositions 2.10 and 2.11]{beklemishev2005} 
and \citet[Lemma 3.3.7]{smorynski1985}.

\begin{fact}[Provable $\Sigma_{m+1}$-completeness]\label{fact:completeness}
Let $\T$ be a $\Sigma_{n+1}$-definable theory, and let $\sigma(x_1,\dots,x_k)$ 
be any $\Sigma_{m+1}$ formula. Then
\[
\ID \vdash \forall x_1,\dots,x_k(\sigma(x_1,\dots,x_k)\rightarrow \pr_{\T,\Sigma_{m+1}}(\ulcorner \sigma(\dot{x}_1,\dots,\dot{x}_k)\urcorner))\text{.}
\]
\end{fact}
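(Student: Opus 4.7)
The plan is to exploit the freedom built into $\pr_{\T,\Gamma}(x)$: because we are allowed to augment $\T$ by an arbitrary true $\Gamma$-sentence before checking provability, we can simply take the relevant instance of $\sigma$ itself as the witness. Concretely, fix a $\Gamma$-formula $\sigma(x_1,\dots,x_n)$ and reason inside $\ID$. Assume $\sigma(x_1,\dots,x_n)$ and let $y := \ulcorner \sigma(\dot{x}_1,\dots,\dot{x}_n)\urcorner$. I would then argue that $y$ itself witnesses the existential quantifier in $\pr_{\T,\Gamma}(\ulcorner \sigma(\dot{x}_1,\dots,\dot{x}_n)\urcorner)$.

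The verification splits into three bookkeeping steps. First, $y$ codes a $\Gamma$-sentence: the map $(x_1,\dots,x_n) \mapsto \ulcorner \sigma(\dot{x}_1,\dots,\dot{x}_n)\urcorner$ is numeral substitution applied to the fixed code of $\sigma$, which is a $\Delta_0$ operation given exponentiation, and the syntactic membership test for $\Gamma$ is also $\Delta_0$. Second, $\tr_\Gamma(y)$ holds: by Fact \ref{fact:satisfaction}, $\ID \vdash \sigma(x_1,\dots,x_n) \leftrightarrow \Sat_\Gamma(\ulcorner\sigma\urcorner, x_1,\dots,x_n)$, and the right-hand side is equivalent to $\tr_\Gamma(y)$ by the provable commutation of numeral substitution with satisfaction; so the hypothesis $\sigma(x_1,\dots,x_n)$ delivers $\tr_\Gamma(y)$. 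Third, $\pr_{\T+y}(y)$: an axiom is a one-line proof of itself, a provably $\Delta_0$ fact about the formal proof predicate. Combining the three steps yields $\exists y \in \Gamma (\tr_\Gamma(y) \land \pr_{\T+y}(\ulcorner \sigma(\dot{x}_1,\dots,\dot{x}_n)\urcorner))$, which is precisely $\pr_{\T,\Gamma}(\ulcorner \sigma(\dot{x}_1,\dots,\dot{x}_n)\urcorner)$.

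The main obstacle is not really mathematical but notational: one must carefully verify that $\ID$ formalises each of the syntactic manipulations invoked (numeral substitution into a fixed template, recognition of codes for $\Gamma$-sentences, construction of a trivial proof from a single new axiom, and the commutation of $\tr_\Gamma$ with substitution). Each such check is $\Delta_0$ in the presence of $\exp$, so nothing genuinely delicate arises. It is worth noting that this argument entirely sidesteps the induction on formula complexity that powers the classical provable $\Sigma_1$-completeness theorem: the ability to throw in the witness $y$ as an additional $\Gamma$-axiom trivialises the hierarchical case, which is precisely why the definition of $\pr_{\T,\Gamma}$ was engineered as it was.
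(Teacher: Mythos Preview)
Your argument is correct, and it is exactly the intended one: take the instance $\ulcorner \sigma(\dot{x}_1,\dots,\dot{x}_n)\urcorner$ itself as the $\Gamma$-witness, use Fact~\ref{fact:satisfaction} (plus the substitution property of $\Sat_\Gamma$) to obtain $\tr_\Gamma$ of that witness, and observe that an axiom proves itself. The paper does not actually supply a proof of this Fact---it merely cites Hilbert--Bernays, Feferman, and Beklemishev---so there is nothing to compare against beyond noting that your proof is the standard one found in those references, and that your closing remark is on point: the definition of $\pr_{\T,\Gamma}$ is designed precisely so that this argument goes through without the inductive analysis needed for ordinary provable $\Sigma_1$-completeness.
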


\begin{fact}[L\"ob conditions]\label{fact:lob}
If $\T$ is a $\Sigma_{n+1}$-definable extension of $\ID$,
then, for each $m\geq 0$, and for all sentences $\phi$, $\psi$,
\begin{enumerate}
\renewcommand{\theenumi}{L\arabic{enumi}}
\item if $\T\vdash \phi$, then $\ISn + \exp + \mathrm{Th}_{\Pi_n}(\mathbb{N}) \vdash \pr_{\T,\Sigma_{m+1}}(\ulcorner \phi \urcorner)$;
 \item $\ID  \vdash \pr_{\T,\Sigma_{m+1}}(\ulcorner \phi \urcorner) \land \pr_{\T,\Sigma_{m+1}}(\ulcorner \phi \rightarrow \psi\urcorner) \rightarrow \pr_{\T,\Sigma_{m+1}}(\ulcorner \psi \urcorner)$;
 \item $\ID \vdash \pr_{\T,\Sigma_{m+1}}(\ulcorner \phi \urcorner) \rightarrow \pr_{\T,\Sigma_{m+1}}(\ulcorner \pr_{\T,\Sigma_{m+1}}(\ulcorner \phi \urcorner)\urcorner)$.
\end{enumerate}

Similar statements also hold for formulae:
\begin{enumerate}
\renewcommand{\theenumi}{L\arabic{enumi}'}
\item if $\T\vdash \forall x \phi(x)$, then $\ISn + \exp + \mathrm{Th}_{\Pi_n}(\mathbb{N}) \vdash \forall x \pr_{\T,\Sigma_{m+1}}(\ulcorner \phi(\dot{x}) \urcorner)$;
\item $\ID  \vdash \forall x (\pr_{\T,\Sigma_{m+1}}(\ulcorner \phi(\dot{x}) \urcorner) \land \pr_{\T,\Sigma_{m+1}}(\ulcorner \phi(\dot{x}) \rightarrow \psi(\dot{x})\urcorner)\\\phantom{\ID  \vdash \,}\rightarrow \pr_{\T,\Sigma_{m+1}}(\ulcorner \psi(\dot{x}) \urcorner))$;
 \item $\ID \vdash \forall x (\pr_{\T,\Sigma_{m+1}}(\ulcorner \phi(\dot{x}) \urcorner) \rightarrow \pr_{\T,\Sigma_{m+1}}(\ulcorner \pr_{\T,\Sigma_{m+1}}(\ulcorner \phi(\dot{x}) \urcorner)\urcorner))$.
\end{enumerate}

\end{fact}
The stronger background theory used in items L1.\ and L1'.\ is enough to ensure the
numerability of the $\Sigma_{n+1}$-definable theory $\T$. For r.e.\ theories $\T$, 
$\ID$ suffices. 

We now turn our attention to the \emph{bounded provability predicates} that 
feature prominently in the sequel. Consider again the $\Sigma_{n+1}$ 
formula $\pr_{\T,\Sigma_{n+1}}(x)$ for a $\Sigma_{n+1}$-definable $\T$. 
We may assume that there is a $\Pi_n$ formula $\pi(x,y)$ such that 
\[
 \ISn \vdash \pr_{\T,\Sigma_{n+1}}(x) \leftrightarrow \exists y \pi(x,y)\text{.}
\]
Let $\pr_{\T,\Sigma_{n+1}}^k(x)$ be the formula $\exists z \leq k \pi(x,z)$.\footnote{In 
the notation of, e.g., \citet{lindstrom_shavrukov2008}, $\pr^k_{\T,\Sigma_{n+1}}(x)$ 
would be written $k : \pr_{\T,\Sigma_{n+1}}(x)$.} 
Then, by Fact \ref{fact:closure}, this formula is $\Pi_n$ in $\ISn$, 
and therefore decidable in 
$\ISn + \mathrm{exp} + \mathrm{Th}_{\Pi_n}(\mathbb{N})$.\footnote{The additional axiom 
$\mathrm{exp}$ is again only required for $n=0$, to handle
the partial truth definition occuring in $\pr_{\T,\Sigma_{n+1}}$. 
Remarks of this type will hereafter be omitted.} As a consequence, we have strong 
reflection properties for the bounded proof predicates.

\begin{fact}[Uniform small reflection]\label{fact:exp_reflection}
Let $\T$ be a $\Sigma_{n+1}$-definable, consistent extension of $\ID$. For each 
$\phi(x)$ and $k$, we have:
 \[
  \ISn + \mathrm{exp} + \mathrm{Th}_{\Pi_{n}(\mathbb{N})}\vdash \forall x (\pr_{\T,\Sigma_{n+1}}^k(\ulcorner\phi(\dot{x})\urcorner) \rightarrow \phi(x))\text{.}
 \]
\end{fact}

\begin{bibl}
{A forerunner to this reflection principle is proved by \citet[Lemma 2.18]{feferman1962}. 
The generalisation to $\Sigma_{n+1}$-definable theories is straightforward; see also 
\citet[Lemma III.4.40]{hajek_pudlak1993} for some middle ground: reflection for proofs 
from true $\Sigma_{n+1}$ sentences for r.e.\ theories.}
\end{bibl}

\begin{fact}[Formalised small reflection]\label{fact:reflection}
Let $\T$ be a $\Sigma_{n+1}$-definable, consistent extension of $\ISnp$. Then we have:
 \[
  \ID \vdash \forall \phi \forall z\pr_{\T,\Sigma_{n+1}}(\ulcorner \pr_{\T,\Sigma_{n+1}}^{\dot{z}}(\ulcorner \phi \urcorner) \rightarrow \phi\urcorner)\text{.}
 \]
\end{fact}

\begin{bibl}
{\citet{verbrugge_visser1994} show how the small reflection principle can be 
formalised in (theories weaker than) $\ID$. I am grateful to one of the refeees 
for pointing out a crucial error in an earlier statement of this Fact.}
\end{bibl}

\subsection{Model theory of arithmetic}

The remainder of this section concerns the model theory of arithmetic, 
building up to a characterisation of $\Pi_n$-conservativity in the spirit 
of Orey, H\'{a}jek, Guaspari, and Lindstr\"om. A first step towards that 
goal is the following miniaturisation of the arithmetised completeness 
theorem in the style of \citet[Theorems 1.7 and 2.2]{mcaloon1978}. 

 \begin{fact}[The arithmetised completeness theorem]\label{fact:ACT}
  Fix $m\leq n$. If $\mathcal{M} \models \ISnp$, and $\T$ is a theory that is
  $\Sigma_{n+1}$-definable in $\mathcal{M}$ such that $\mathcal{M} \models \con_{\T,\Pi_m}$, 
  then there is a $\Sigma_m$-elementary end-extension of $\mathcal{M}$ satisfying $\T$.
 \end{fact}

The proof of the arithmetised completeness theorem rests on the following version of the low basis theorem by \citet[Corollary I.3.10(1)]{hajek_pudlak1993}: 
 \begin{fact}[Low basis theorem]\label{fact:lowbasis}
  Provably in $\ISnp$, each dyadic unbounded $\Delta_{n+1}$ tree has an unbounded $LL_{n+1}$ branch.
 \end{fact}
Here, a \emph{tree} is a set of finite binary sequences that is closed under taking initial segments. A \emph{branch} through a tree is a subtree that is linearly ordered under the relation ``being an initial segment of''. See \citet[Chapter I.3(b)]{hajek_pudlak1993} for more details.
  A definition of the class $LL_{n+1}$ used in the statement of the low basis theorem can be found in \citet[Chapter I.2(d)]{hajek_pudlak1993}. The only properties of $LL_{n+1}$ sets that are used in the proof of the arithmetised completeness theorem is that $\ISnp$ proves induction for $\Sigma_1(LL_{n+1})$ sets, and that every set recursive in an $LL_{n+1}$ set is itself $LL_{n+1}$ \citep[I.2.78--79]{hajek_pudlak1993}. 

For the proof of the arithmetised completeness theorem, we also rely on $\ISnp$ being able to define formalised versions of syntactic and semantic notions such as ``formula'', ``term'', ``theory'', ``satisfaction'', and ``model'', so that the relevant constructions can be carried out within $\ISnp$ itself. The reader is again referred to \citet{hajek_pudlak1993}, especially Chapter I.4, for a detailed development of these concepts. The generalisation to $\Sigma_{n+1}$-definable theories is straightforward, and fits safely within $\ISnp$.

\begin{proof}[Proof of the arithmetised completeness theorem]
  Fix $m\leq n$. Let $\mathcal{M} \models \ISnp$ and let $\T$ be a theory $\Sigma_{n+1}$-definable in $\mathcal{M}$ such that $\mathcal{M}\models \con_{\T,\Pi_{m}}$. Reason in $\mathcal{M}$:
  
  Since $\T$ is $\Sigma_{n+1}$ and we have $\ISnp$, we may assume $\T$ to be $\Pi_n$-defined and Henkinised. Let $\psi_0,\psi_1,\dots$ be an enumeration of all sentences. Define a dyadic tree $T$ by
  \begin{quote}
  $s \in T$ iff there is no $p \leq s $ such that $p$ is a proof of contradiction in $\T$ from the true $\Pi_{m}$ sentences plus $\{ \psi_i^{(s)_i} : i < l(s)\}$.
  \end{quote}
  The proof relation for $\T + \mathrm{Th}_{\Pi_{m}}(\mathcal{M})$ is $\Pi_{n}$ in $\mathcal{M}$, so the tree $T$ is at most $\Delta_{n+1}$ in $\mathcal{M}$. Then $\ISnp$ suffices to show that $T$ is unbounded, so by Fact \ref{fact:lowbasis} there is an unbounded $LL_{n+1}$ branch $B = \{b_0,b_1,\dots\}$ through $T$. 
  
  Let $\hat{\T} = \{ \psi_i : b_i = 1\}$.
  Then $\hat{\T}$ is 
 recursive in the $LL_{n+1}$ branch $B$,
  and is therefore $LL_{n+1}$ itself.
  A term model $\mathcal{K}$ satisfying $\T + \mathrm{Th}_{\Pi_{m}}(\mathcal{M})$ can then be read off $\hat{\T}$ in the usual way.
Finally, note that $\mathcal{K}$ is recursive in $\hat{\T}$ and therefore $LL_{n+1}$.
Using induction for $\Sigma_1(LL_{n+1})$ (provided by $\ISnp$), we can now define an $LL_{n+1}$ embedding $f$ of $\mathcal{M}$ onto an initial segment of $\mathcal{K}$ by letting $f(0) = 0^\mathcal{K}$ and $f(x+1) =$ the $\mathcal{K}$-successor of $f(x)$.
 \end{proof}

The next few facts are used in the proof of the generalised Orey-Hájek characterisation.

\begin{fact}[Overspill]
 Let $\mathcal{M} \models \ISnp$. Suppose that $a \in M$ and that $\phi(x,y)$ 
 is a $\Sigma_0(\Sigma_{n+1})$ formula such that $\mathcal{M} \models \phi(k,a)$ 
 for all $k\in\omega$. Then there is a $b \in M \setminus \omega$ such that 
 $\mathcal{M} \models \forall x \leq b \phi(x,a)$.
\end{fact}

\begin{bibl}
 {The notion of overspill is originally due to \cite{robinson1963}, while the 
 hierarchical version stated here is from \citet[Corollary IV.1.16]{hajek_pudlak1993}.}
\end{bibl}

\begin{fact}\label{fact:code}
 If $\mathcal{M}$ is a non-standard model of $\ISnp$, and $\phi(x)$ is a $\Sigma_{n+1}$ 
 formula which may include parameters from $\mathcal{M}$, then $\{ k \in \omega : \mathcal{M} \models \phi(k)\}$ is coded in $\mathcal{M}$.
\end{fact}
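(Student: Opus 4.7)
The plan is to use the $\Sigma_n$ partial satisfaction predicate from Fact~\ref{fact:satisfaction} to convert the external condition ``$\mathcal{M} \models \phi(k)$'' into a single internal $\Sigma_n$ statement, and then to exhibit some $c \in M$ whose Ackermann-decoded set agrees with this statement on every standard $k$. Concretely, let $\sigma(x) := \Sat_{\Sigma_n}(\ulcorner \phi \urcorner, x)$. Since $\mathcal{M} \models \ISn$ and $\ISn \supseteq \ID$, Fact~\ref{fact:satisfaction} yields $\mathcal{M} \models \phi(k) \leftrightarrow \sigma(k)$ for every $k \in M$, and in particular for every $k \in \omega$.

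The main step is to show that $\ISn$ proves
\[
  \forall t\,\exists c\,\forall m < t\,(m \epsilon c \leftrightarrow \sigma(m))\text{.}
\]
I would prove this by induction on $t$, extending a witness $c$ for stage $t$ to either $c$ or $c + 2^t$ depending on whether $\sigma(t)$ holds. Applying the resulting theorem inside $\mathcal{M}$ to any nonstandard $t \in M$ yields a $c \in M$ with $\mathcal{M} \models m \epsilon c \leftrightarrow \sigma(m)$ for every $m <_{\mathcal{M}} t$. Since every $k \in \omega$ satisfies $k <_{\mathcal{M}} t$, combining this with the equivalence $\sigma(k) \leftrightarrow \phi(k)$ gives $\{k \in \omega : \mathcal{M} \models \phi(k)\} = \{k \in \omega : \mathcal{M} \models k \epsilon c\}$, i.e.\ the set is coded by $c$.

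The main obstacle is the complexity bookkeeping of the induction formula: a naive reading of $\exists c\,\forall m < t\,(m \epsilon c \leftrightarrow \sigma(m))$ places it at $\Sigma_{n+1}$, because the biconditional sits in $\mathrm{B}_n$ and the subsequent bounded universal and existential do not obviously reduce it. The standard remedy is to split the biconditional into its two directions and construct $c$ by a bounded recursion whose underlying induction runs on a genuinely $\Sigma_n$ formula, invoking $\mathrm{B}\Sigma_n$ (available in $\ISn$) to uniformly bound the $\Sigma_n$ witnesses required for the backward direction on $[0,t)$. Alternatively, one may simply cite the standard H\'ajek--Pudl\'ak lemma that every bounded $\Sigma_n$-definable set is coded in any model of $\ISn$, and feed it the bounded set $\{m <_{\mathcal{M}} t : \mathcal{M} \models \sigma(m)\}$ for nonstandard $t$.
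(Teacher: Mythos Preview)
The paper does not prove this statement: it is listed among the preliminary Facts without proof or even a bibliographical remark, as it is a standard lemma from the model theory of fragments of arithmetic (see e.g.\ H\'ajek--Pudl\'ak or Kaye). So there is no ``paper's own proof'' to compare against; I comment on your argument on its own merits.

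Your outline is correct and is essentially the standard proof. Two remarks. First, the detour through $\Sat_{\Sigma_n}$ is unnecessary: $\phi$ is a fixed formula, so you can run the whole argument directly with $\phi$ in place of $\sigma$. This also removes the appeal to $\ISn \supseteq \ID$ (true for $n \geq 1$, which is the intended range here, but an avoidable dependency). Second, the complexity worry you flag is genuine, and your first proposed fix is the right one, though it deserves one more line of detail: writing $\phi(m)$ as $\exists y\,\psi(m,y)$ with $\psi \in \Pi_{n-1}$, $\mathrm{B}\Sigma_n$ (provable in $\ISn$) yields a bound $b$ with $\forall m < t\,(\phi(m) \leftrightarrow \exists y < b\,\psi(m,y))$; the right-hand side is $\Pi_{n-1}$ (or $\Delta_0$ when $n=1$), so the problem reduces to coding a bounded $\Pi_{n-1}$-definable set, and a meta-induction on $n$ bottoms out at a genuine $\Sigma_1$ induction on $t$ for the $\Delta_0$ case, exactly your ``$c$ or $c + 2^t$'' step. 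Your alternative of citing the H\'ajek--Pudl\'ak lemma is precisely what the paper does by listing this as a Fact.
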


It follows that if $\mathcal{M} \models \ISnp$, then $\Th_{\Sigma_{n+1}}(\mathcal{M})$ 
is coded in $\mathcal{M}$.

\begin{fact}[\citealt{mcaloon1982}, cf.\ \citealt{daquino1993}]\label{fact:mcaloon}
 If $\mathcal{M}$ is a countable non-standard model of $\mathrm{I}\Delta_0$, 
 and $\T$ is $\Sigma_1$-sound, then for every non-standard  $c \in M$, there 
 is a non-standard initial segment of $\mathcal{M}$ below $c$ that is a model of $\T$.
\end{fact}

\begin{fact}[Refined Friedman embedding theorem]\label{fact:friedman}
 If $\mathcal{M}$ and $\mathcal{N}$ are countable non-standard models of $\ISnp$
then the following are equivalent:
 \begin{enumerate}
  \item $\mathcal{M}$ is embeddable as a $\Sigma_n$-elementary initial segment of $\mathcal{N}$;
  \item $\SSy(\mathcal{M}) = \SSy(\mathcal{N})$ and 
  $\Th_{\Sigma_{n+1}}(\mathcal{M}) \subseteq \Th_{\Sigma_{n+1}}(\mathcal{N})$.
 \end{enumerate}
\end{fact}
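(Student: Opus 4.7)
The forward implication $(1) \Rightarrow (2)$ is a routine preservation argument. If $\mathcal{M}$ embeds as a $\Sigma_n$-elementary initial segment of $\mathcal{N}$, then any $\Sigma_{n+1}$-sentence $\exists x\,\pi(x)$ with $\pi \in \Pi_n$ true in $\mathcal{M}$ is witnessed by some $a \in M \subseteq N$, and $\pi(a)$ carries over by $\Pi_n$-elementarity (equivalent to $\Sigma_n$-elementarity). For the standard systems, $\SSy(\mathcal{M}) \subseteq \SSy(\mathcal{N})$ is immediate since the bit predicate is $\Sigma_0$. The converse uses that $\ISnp$ proves totality of exponentiation: given $X \in \SSy(\mathcal{N})$ coded by $a \in N$ and a non-standard $b \in M$, the $\Sigma_0$-definable truncation $\{j < b : j\,\epsilon\,a\}$ is coded by some $c < 2^b$ in $\mathcal{N}$, and $c \in M$ since $2^b \in M$ and $\mathcal{M}$ is an initial segment.

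For $(2) \Rightarrow (1)$ I would perform a back-and-forth. Enumerate $M$ and $N$ and build a chain $f_0 \subseteq f_1 \subseteq \cdots$ of finite partial maps satisfying the invariant~(P): for every $\Sigma_{n+1}$-formula $\phi(\bar{y})$ and the current tuple $\bar{a}_s \mapsto \bar{b}_s$, $\mathcal{M} \models \phi(\bar{a}_s)$ implies $\mathcal{N} \models \phi(\bar{b}_s)$. Since $\Sigma_n \subseteq \Sigma_{n+1}$ and negations of $\Sigma_n$-formulas sit in $\Sigma_{n+1}$, (P) forces $\Sigma_n$-elementarity of each $f_s$. Even stages dovetail each $a_i$ into the domain; odd stages dovetail every $b_i < f_s(a_j)$ into the range, so that $\bigcup_s f_s$ has downward closed image in $N$.

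The heart of the argument is the forward extension lemma: given $f_s$ satisfying (P) and $a \in M$, produce $b \in N$ extending (P). Code the positive $\Sigma_{n+1}$-type $p = \{\ulcorner \phi \urcorner : \phi \in \Sigma_{n+1},\, \mathcal{M} \models \phi(\bar{a}_s, a)\}$ by some $\hat{p} \in M$, via $\Sat_{\Sigma_{n+1}}$ (Fact \ref{fact:satisfaction}) and a parameter version of Fact \ref{fact:code} in $\ISnp$; by $\SSy(\mathcal{M}) = \SSy(\mathcal{N})$, the same standard set is coded by $\hat{q} \in N$. For each standard $K$, the $\Sigma_{n+1}$-formula $\exists x \bigwedge_{j<K,\, j \in p} \phi_j(\bar{y}, x)$ is realised by $a$ in $\mathcal{M}$ with $\bar{y} = \bar{a}_s$, so by (P) it holds in $\mathcal{N}$ with $\bar{y} = \bar{b}_s$; rewritten via $\Sat_{\Sigma_{n+1}}$, $\mathcal{N} \models \exists x\,\forall j < K (j\,\epsilon\,\hat{q} \rightarrow \Sat_{\Sigma_{n+1}}(j, \bar{b}_s, x))$. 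This is $\Sigma_{n+1}$ in $K$ by $\mathrm{B}\Sigma_{n+1}$, so $\Sigma_{n+1}$-overspill in $\ISnp$ supplies a non-standard $K^*$ and a witness $b \in N$ realising $p$ over $\bar{b}_s$, extending (P).

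The main obstacle is the back direction: extending (P) when adding $b < f_s(a_j)$ requires $a < a_j$ in $M$ whose $\Sigma_{n+1}$-type over $\bar{a}_s$ is \emph{contained in} $b$'s over $\bar{b}_s$ — equivalently, $a$ realises $b$'s $\Pi_{n+1}$-type. Here the hypothesis runs the wrong way for direct transfer, but via the contrapositive of (P) it gives exactly the required backward $\Pi_{n+1}$-preservation with parameters. Mirroring the forward argument (coded $\Pi_{n+1}$-type, $\Sat_{\Pi_{n+1}}$, a bounded existential asserting realisability below $f_s(a_j)$ at $\Pi_{n+1}$-complexity by $\mathrm{B}\Sigma_{n+1}$, and $\Pi_{n+1}$-overspill in $\mathcal{M}$) then completes the extension. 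The delicate point throughout is that codes for the same standard set in $M$ and $N$ need not correspond under $f_s$; one writes the formulae so that they depend only on the local code in each model, so that for every standard $K$ the two truncated conjunctions coincide, and it is $\SSy$ equality rather than $f_s$ itself that transports the coded types.
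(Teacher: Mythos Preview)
The paper does not give a proof of this Fact; it is stated with a bibliographical remark pointing to Ressayre, Dimitracopoulos--Paris, and Cornaros, and the text only says that ``the hierarchical generalisation is straightforward''. So there is no in-paper argument to compare against.

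That said, your sketch is a faithful rendition of the standard proof in those references. The forward direction is routine as you present it. For $(2)\Rightarrow(1)$, the Friedman-style forth-and-bounded-back construction under the invariant of upward $\Sigma_{n+1}$-preservation is exactly the intended approach: code the relevant $\Sigma_{n+1}$- (resp.\ $\Pi_{n+1}$-) type using the partial satisfaction predicate and Fact~\ref{fact:code} at level $n+1$, transfer the code via $\SSy(\mathcal{M})=\SSy(\mathcal{N})$, and use overspill on the bounded realisability statement, whose complexity is kept at $\Sigma_{n+1}$ (forth) or $\Pi_{n+1}$ (back) by $\mathrm{B}\Sigma_{n+1}$, available in $\ISnp$. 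Your handling of the back step---observing that the invariant in contrapositive form gives downward $\Pi_{n+1}$-preservation with parameters, and that the bounded existential $\exists x<f_s(a_j)\,\bigwedge\Pi_{n+1}$ stays $\Pi_{n+1}$ under $\mathrm{B}\Sigma_{n+1}$---is the correct idea and the genuine crux. The remark that the codes $\hat p$ and $\hat q$ need not correspond under $f_s$, and that one only uses that they determine the same standard set, is also exactly the point one must be careful about. In short: nothing to compare, but nothing to object to either.
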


\begin{bibl}
{This refinement of Friedman's \citeyearpar{friedman1973} embedding theorem for 
$n=0$ is due to \citet[Theorem 1.I]{ressayre1987} and \citet[Corollary 2.4]{dimitracopoulos_paris1988}, independently. The hierarchical generalisation is straightforward, and has 
been worked out by \citet[Corollary 15]{cornaros}.}
\end{bibl}
We are now ready to prove the final fact:
an excerpt of a generalisation of the Orey-H\'{a}jek-Guaspari-Lindstr\"om 
characterisation of interpretability. For extensions of $\pa$, the equivalence 
of 1.\ and 3.\ is due to Guaspari \citeyearpar[Theorem 6.5(1)]{guaspari1979}. 
The equivalence of 1.\ and 2.\ for finitely axiomatisable theories seems to have 
been known to experts for some time, while the equivalence of 2.\ and 3.\ for 
r.e.\ extensions of fragments of $\pa$ is stated without proof by \citet[Theorem 2.11]{blanck_enayat2017}. 
With the previous facts of this section in place,
the generalisation to $\Sigma_{n+1}$-definable theories presents no further difficulties.
 
 \begin{fact}[OHGL characterisation]\label{fact:OHGL}
Let $\s$ and $\T$ be $\Sigma_{n+1}$-definable, consistent extensions of $\ISnp$, and 
suppose that $\s$ is also $\Pi_n$-complete. The following are equivalent:
\begin{enumerate}
  \item $\s$ is $\Pi_{n+1}$-conservative over $\T$;
 \item for all $k\in\omega$, $\T \vdash \con_{\s,\Sigma_{n+1}}^k$;
 \item every countable model $\mathcal{M}$ of $\T$ with $\s \in \SSy(\mathcal{M})$ 
        has a $\Sigma_n$-elementary extension to a model of $\s$.
\end{enumerate}
\end{fact}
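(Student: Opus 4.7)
My plan is to prove the equivalence by establishing $(1) \Leftrightarrow (2)$ via a syntactic argument on the provability predicates, and $(1) \Leftrightarrow (3)$ via a model-theoretic one running through the refined Friedman embedding theorem (Fact~\ref{fact:friedman}). The main technical work lies in $(1) \Rightarrow (3)$, where a countable model of $\s$ with the prescribed standard system and $\Sigma_{n+1}$-theory must be constructed.

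For $(1) \Rightarrow (2)$, I would apply small reflection (Fact~\ref{fact:reflection}) to $\s$ with $\phi := \bot$, specialising $z$ to the standard numeral $k$, obtaining $\s \vdash \con_{\s, \Sigma_{n+1}}^{k}$ for each $k \in \omega$. Since this sentence is $\Pi_{n+1}$, $\Pi_{n+1}$-conservativity transfers it to $\T$. For $(2) \Rightarrow (1)$, suppose $\s \vdash \pi$ with $\pi \in \Pi_{n+1}$. L\"ob's condition~1 (Fact~\ref{fact:lob}) yields $\ID \vdash \pr_{\s, \Sigma_{n+1}}(\ulcorner \pi \urcorner)$; provable $\Sigma_{n+1}$-completeness (Fact~\ref{fact:completeness}) applied to $\lnot \pi \in \Sigma_{n+1}$ gives $\ID \vdash \lnot \pi \to \pr_{\s, \Sigma_{n+1}}(\ulcorner \lnot \pi \urcorner)$; and L\"ob's condition~2 combines these to $\ID \vdash \lnot \pi \to \pr_{\s, \Sigma_{n+1}}(\bot)$. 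Tracking proof sizes through each step yields a standard $K$ with $\ID \vdash \con_{\s, \Sigma_{n+1}}^{K} \to \pi$, whence (2) produces $\T \vdash \pi$.

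For $(3) \Rightarrow (1)$, assume $\s \vdash \pi \in \Pi_{n+1}$ while $\T \nvdash \pi$, so $\T + \lnot \pi$ is consistent. Pick a countable recursively saturated model $\mathcal{M} \models \T + \lnot \pi$; then $\s \in \SSy(\mathcal{M})$ since $\s$ is arithmetical. By (3) there is a $\Sigma_n$-elementary extension $\mathcal{N} \supseteq \mathcal{M}$ with $\mathcal{N} \models \s$, whence $\mathcal{N} \models \pi$. But $\lnot \pi \in \Sigma_{n+1}$ holds in $\mathcal{M}$ and is preserved upward under $\Sigma_n$-elementary extensions (by the observation in the notation section), giving $\mathcal{N} \models \lnot \pi$, a contradiction.

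For $(1) \Rightarrow (3)$, let $\mathcal{M}$ be a countable model of $\T$ with $\s \in \SSy(\mathcal{M})$. First I would observe that the parameter-free theory $\s \cup \Th_{\Sigma_{n+1}}(\mathcal{M})$ is externally consistent: otherwise some $\sigma \in \Sigma_{n+1}$ true in $\mathcal{M}$ satisfies $\s \vdash \lnot \sigma$, so (1) yields $\T \vdash \lnot \sigma$, contradicting $\mathcal{M} \models \T + \sigma$. The hard part is then to exhibit a countable model $\mathcal{N} \models \s \cup \Th_{\Sigma_{n+1}}(\mathcal{M})$ with $\SSy(\mathcal{N}) = \SSy(\mathcal{M})$: since $\SSy(\mathcal{M})$ is a Scott set and the target theory is consistent, $\mathcal{N}$ can be produced as the countable recursively saturated model realising exactly this Scott set, for example via an omitting-types argument omitting all $X \notin \SSy(\mathcal{M})$. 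With $\mathcal{N}$ in hand, Fact~\ref{fact:friedman} embeds $\mathcal{M}$ as a $\Sigma_n$-elementary initial segment of $\mathcal{N}$, supplying the desired $\Sigma_n$-elementary extension.
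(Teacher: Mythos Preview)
The paper does not actually prove Fact~\ref{fact:OHGL}; it is stated as a background fact with attributions to Guaspari, to folklore, and to \cite{blanck_enayat2017}, together with the remark that the passage to $\Sigma_{n+1}$-definable theories ``present[s] no further difficulties''. So there is no proof in the paper to compare against, and your overall decomposition into a syntactic equivalence $(1)\Leftrightarrow(2)$ via small reflection and bounded provability, together with a model-theoretic equivalence $(1)\Leftrightarrow(3)$ via the refined Friedman embedding theorem, is exactly the standard route taken in the cited literature.

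There is, however, one genuine gap. In your argument for $(3)\Rightarrow(1)$ you choose a countable recursively saturated $\mathcal{M}\models \T+\lnot\pi$ and assert that $\s\in\SSy(\mathcal{M})$ ``since $\s$ is arithmetical''. Recursive saturation does not give this: the standard system of a countable recursively saturated model of $\ISnp$ can be any countable Scott set, and there are Scott sets that omit, say, $\emptyset'$ (by iterating the low basis theorem one obtains $\omega$-models of $\mathsf{WKL}_0$ consisting entirely of low sets). Thus for $n=0$ and $\s=\emptyset'$ your justification fails. What does hold is that the $\mathcal{M}$-internal extension $\{k\in\omega:\mathcal{M}\models\sigma_\s(k)\}$ is coded (Fact~\ref{fact:code}), but this need not coincide with the externally defined set $\s$.

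The fix is straightforward and does not disturb the rest of your argument: instead of invoking recursive saturation, choose $\mathcal{M}$ so that $\SSy(\mathcal{M})$ is a countable Scott set containing $\s$ (for instance the Scott set of all arithmetical sets, which contains both $\s$ and the $\Sigma_{n+1}$-definable theory $\T+\lnot\pi$). Such a model exists by the usual Scott-set realisation lemma, and the remainder of your $(3)\Rightarrow(1)$ argument then goes through unchanged. Your treatments of $(1)\Leftrightarrow(2)$ and $(1)\Rightarrow(3)$ are correct; for the latter note, as you implicitly use, that $\s\cup\Th_{\Sigma_{n+1}}(\mathcal{M})\in\SSy(\mathcal{M})$ by the hypothesis of~(3) together with Fact~\ref{fact:code}, so the Scott-set construction applies.
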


\begin{proof}
 1. $\Rightarrow$ 2. Suppose that $\s$ is $\Pi_{n+1}$-conservative over $\T$. By 
 Fact \ref{fact:exp_reflection} and $\Pi_n$-completeness of $\s$, 
 $\s \vdash \con_{\s,\Sigma_{n+1}}^k$ for all $k\in \omega$. But 
 $\con_{\s,\Sigma_{n+1}}^k$ is at most $\Pi_{n+1}$, so $\T \vdash \con_{\s,\Sigma_{n+1}}^k$ 
 for all $k \in \omega$.
 
 2. $\Rightarrow$ 3. Suppose $\T \vdash \con_{\s,\Sigma_{n+1}}^k$ for all 
 $k \in \omega$. Let $\mathcal{M}$ be a countable model of $\T$ and suppose 
 that $\s \in \SSy(\mathcal{M})$.
 Since $\T$ extends $\ISnp$, 
 it follows that $\con_{\s,\Sigma_{n+1}}^k$ is at most $\Pi_{n+1}$, 
 and we can use overspill to get $\mathcal{M} \models \con_{\s,\Sigma_{n+1}}^c$ 
 for some non-standard $c\in M$. By Fact \ref{fact:mcaloon}, there is a 
 submodel $\mathcal{M}_0 \models \pa$ of $\mathcal{M}$, all of whose elements 
 are below $c$, and there is some non-standard $a$ below $c$ that codes 
 $\Th_{\Sigma_{n+1}}(\mathcal{M})$. This ensures that 
 $\mathcal{M}_0 \models \con_{\s + \{m:m\varepsilon a\}}$, so Fact \ref{fact:ACT} 
 guarantees the existence of an end-extension $\mathcal{K}$ of $\mathcal{M}_0$, 
 satisfying $\s + \{m:m\varepsilon a\}$ and therefore also $\s + \Th_{\Sigma_{n+1}}(\mathcal{M})$. 
 
 At this point, the situation is that $\SSy(\mathcal{M}) = \SSy(\mathcal{M}_0) = \SSy(\mathcal{K})$, 
 $\mathcal{M}$ and $\mathcal{K}$ are countable, and 
 $\Th_{\Sigma_{n+1}}(\mathcal{M}) \subseteq \Th_{\Sigma_{n+1}}(\mathcal{K})$.
  Then 
 Fact \ref{fact:friedman} ensures that $\mathcal{M}$ can be embedded as a 
 $\Sigma_n$-elementary initial segment of $\mathcal{K}$.

 3. $\Rightarrow$ 1. Prove the contrapositive statement by assuming that $\s$ is 
 not $\Pi_{n+1}$-conservative over $\T$. Then there is a $\Pi_{n+1}$ sentence 
 $\pi$ such that $\s \vdash \pi$ but $\T+ \lnot \pi$ is consistent. Let 
 $\mathcal{M} \models \T+ \lnot \pi$
 . If $\mathcal{K} \models \s$ were a $\Sigma_n$-elementary extension of 
 $\mathcal{M}$, then $\mathcal{K}$ would satisfy both $\pi$ and $\lnot \pi$, 
 a contradiction.
\end{proof}

\section{Applications}\label{section:applications}
The goal of this section is to prove a handful of hierarchical incompleteness 
results, using the tools we reviewed in the previous one.
The first such result stems from \citet[Theorem 2]{mostowski1961}, who proved 
that whenever $\{\T_i: i \in \omega\}$ is an r.e.\ family of consistent, 
r.e.\ theories extending $\tq$, then there is a $\Pi_1$ formula that is 
simultaneously independent over these theories. Here, we understand the 
concept of an independent formula in the following way:

\begin{definition}
 A formula $\xi(x)$ is \emph{independent} over $\T$ if, for every 
 $g : \omega \to \{0,1\}$, the theory $\T + \{ \xi(k)^{g(k)}\}$ is consistent. 
 Recall that $\xi(k)^0= \lnot \xi(k)$ and $\xi(k)^1 = \xi(k)$.
\end{definition}

While one of Mostowski's accomplishments was the simultaneous independence 
over a whole r.e.\ family of theories, this aspect of his result is 
deliberately ignored here. Instead, we focus on how to construct formulae 
independent over $\Sigma_{n+1}$-definable theories. 

\begin{thm}\label{theorem:independence}
 Let $\T$ be a $\Sigma_{n+1}$-definable, $\Sigma_n$-sound 
 extension of $\ISn + \mathrm{exp}$. Then there is a $\Sigma_{n+1}$ 
 formula $\xi(x)$ that is independent over $\T$.
\end{thm}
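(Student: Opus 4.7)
The plan is to adapt the classical Mostowski--Rosser construction of an independent formula to the hierarchical setting, combining Craig's trick (Fact \ref{fact:craig}), the parametric diagonal lemma (Fact \ref{fact:diagonal}), and the $\mathrm{B}_n$-completeness of the auxiliary theory $\T^+ := \T + \Th_{\mathrm{B}_n}(\mathbb{N})$, which allows specific-numeral facts about the provability predicate to be internalised.

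By Craig's trick I may assume $\T$ is defined by a $\Pi_n$ formula, so that $\prf_\T$ is $\Pi_n$. Since $\T$ is $\Sigma_n$-sound, Fact \ref{fact:soundness} gives the consistency of $\T + \Th_{\Sigma_{n+1}}(\mathbb{N})$ and hence of its subtheory $\T^+$; moreover $\T^+$ is $\Sigma_{n+1}$-definable and proves every true $\mathrm{B}_n$-sentence about specific numerals. Because any formula independent over $\T^+$ is a fortiori independent over $\T$, it suffices to construct such a $\xi$ for $\T^+$. Applying the parametric diagonal lemma yields a $\Sigma_{n+1}$ formula $\xi(x)$ satisfying
\[
\tq \vdash \xi(x) \leftrightarrow \exists p\,\exists s \leq p\,\bigl[\prf_{\T^+}(\ulcorner\lnot\bigwedge_{i<|s|}\xi(i)^{s(i)}\urcorner, p) \land x < |s| \land s(x) = 0 \land \mathrm{Ros}(x,p)\bigr],
\]
where the bounded Rosser-priority clause $\mathrm{Ros}(x,p) := \forall q \leq p\,\forall t \leq p\,(x < |t| \land t(x) = 1 \to \lnot\prf_{\T^+}(\ulcorner\lnot\bigwedge_{i<|t|}\xi(i)^{t(i)}\urcorner, q))$. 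Bounding all inner quantifiers by $p$ is what keeps $\xi$ in $\Sigma_{n+1}$.

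Suppose for contradiction that $\xi$ is not independent over $\T^+$, and let $(p_0, s_0)$ be the lex-least pair with $p_0$ a $\T^+$-proof of $\lnot\bigwedge_{i<|s_0|}\xi(i)^{s_0(i)}$. A direct external case analysis shows that $\xi(x)$ holds in $\mathbb{N}$ iff $s_0(x) = 0$, for each $x < |s_0|$. Internalise this in $\T^+$: for $s_0(i) = 0$, take $(p_0, s_0)$ as witness to $\xi(i)$, the body consisting of the true $\Pi_n$ instance $\prf_{\T^+}(\ldots, p_0)$ plus the finitely many true $\Sigma_n$ instances $\lnot\prf_{\T^+}(\ldots, q)$ for $q \leq p_0$ entering $\mathrm{Ros}(i, p_0)$, all provable by $\mathrm{B}_n$-completeness. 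For $s_0(i) = 1$, derive $\T^+ \vdash \lnot\xi(i)$ by splitting on $p$: for $p \leq p_0$ the same $\mathrm{B}_n$-facts exclude every candidate $(p, s \leq p)$; for $p > p_0$ the pair $(p_0, s_0)$ itself uniformly falsifies $\mathrm{Ros}(i, p)$. Thus $\T^+ \vdash \bigwedge_{i<|s_0|}\xi(i)^{s_0(i)}$, contradicting $\T^+ \vdash \lnot\bigwedge_{i<|s_0|}\xi(i)^{s_0(i)}$.

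The main subtleties are that the Rosser-priority clause must have its inner quantifiers bounded, lest $\xi$ drift out of $\Sigma_{n+1}$; and that the internalisation step must route through $\T^+$ rather than $\T$ directly, which is exactly what the $\Sigma_n$-soundness hypothesis permits via Fact \ref{fact:soundness}.
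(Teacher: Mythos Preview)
Your argument is essentially correct and takes a genuinely different route from the paper's, with one small repair needed. You apply Fact~\ref{fact:craig} to $\T$ and then pass to $\T^+ := \T + \Th_{\mathrm{B}_n}(\mathbb{N})$, but it is $\prf_{\T^+}$, not $\prf_\T$, that occurs inside your fixed point. Since $\Th_{\mathrm{B}_n}(\mathbb{N})$ is only $\Delta_{n+1}$, the predicate $\prf_{\T^+}$ as written is not $\Pi_n$, so the body of $\xi$ is not in $\mathrm{B}_n$ and your complexity claim fails. The fix is immediate: either apply Craig's trick to $\T^+$ itself, or equivalently set $\T^+ := \T + \Th_{\Pi_n}(\mathbb{N})$, which is already $\Pi_n$-defined once $\T$ is, and is deductively equivalent to your choice. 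With that adjustment the rest of your argument goes through as stated.

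The paper does not build a Rosser--Mostowski self-referential $\xi$ at all. Instead it constructs a Kripke-style flexible formula via the $n$-recursion-theoretic apparatus of Facts~\ref{fact:post}, \ref{fact:definability}, \ref{fact:normal}, and~\ref{fact:enumeration}: one produces an $n$-index $e$ so that $\xi(x) := \exists z\bigl(R^n(e,e,z) \land \exists y\, T(z,x,y)\bigr)$ has the property that $\T + R^n(e,e,k) \land \exists! z\, R^n(e,e,z)$ is consistent for every $k$, and then independence is obtained by compactness, taking $k$ to be a $\Sigma_1$-index for whichever finite $0/1$-pattern is required. Your approach is the classical direct construction and is more self-contained, needing only the diagonal lemma and $\mathrm{B}_n$-completeness of $\T^+$; the paper's approach invokes more machinery but has the payoff that the same flexible-formula idea feeds directly into Theorems~\ref{theorem:flexibility} and~\ref{theorem:formal_flexibility}.
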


\begin{proof}
 Define a function $f(x)$ by the stipulation that 
 $f(m) = k$ iff there is a proof $p$ of $\varphi^n_m(m) \neq k$ in $\T$, 
 and for each $q<p$ and $k_0 \leq k$, $q$ is not a proof of $\varphi^n_m(m) \neq k_0$ in $\T$.
 
 Since $\T$ is $\Sigma_{n+1}$-definable, there is a deductively equivalent 
 $\Pi_n$-definition of $\T$. Hence, the relation $f(x) = y$ is r.e.\ in 
 $\emptyset^{(n)}$, and therefore partial $n$-recursive.
 Let $e$ be an $n$-index for $f$, and let $\xi(x)$ be the formula 
\[
\exists z (\varphi^n_e(e) = z \land \mathrm{Sat}_{\Sigma_{n+1}}(z,x))\text{.}
\]
 Since $\T$ extends $\ISn$, we may assume that both $\varphi^n_x(y) = z$ and 
 $\xi(x)$ are equivalent in $\T$ to $\Sigma_{n+1}$ formulae. 
 The proof that $\xi(x)$ is as desired has two parts. The first part shows that
 $\T + \varphi^n_e(e) = k$ is consistent for each $k\in\omega$.

Suppose, for a contradiction, that $\T + \varphi^n_e(e) = k$ is inconsistent 
for some $k\in \omega$. We may assume that $k$ is the least such number.
Then $\T \vdash \varphi^n_e(e) \neq k$ with some minimal proof $p$, so $f(e) = \varphi^n_e(e) = k$ by definition. With $\T$ extending $\ISn$, we have 
$\T + \mathrm{Th}_{\Pi_{n}}(\mathbb{N}) \vdash \varphi^n_e(e) = k$ 
by Fact \ref{fact:definability}.\ref{fact:definability_three}. But then
$\T + \mathrm{Th}_{\Pi_{n}}(\mathbb{N})$ is inconsistent, which by 
Fact \ref{fact:soundness} contradicts the assumption that $\T$ is $\Sigma_n$-sound. 
Hence the theory $\T + \varphi^n_e(e) = k$ is consistent for any choice of $k \in \omega$.

In this final part of the proof, we show that $\xi(x)$ is independent over $\T$.
Let $g$ be any function from $\omega$ to $\{0,1\}$ and let $X = \{ \xi(k)^{g(k)} : k\in \omega\}$.
Let $Y$ be any finite subset of $X$, and let $Z$ be the set $\{k: \xi(k) \in Y\}$. 
Let $\zeta(x) \defeq x \varepsilon a$, where $a$ is a code for the finite set $Z$; 
then $\zeta$ binumerates $Z$ in $\tq$. 
 
 Reason in the consistent theory $\T + \varphi^n_e(e) = \ulcorner \zeta\urcorner$:
 \begin{quote}
   If $\xi(x)$, then $\varphi^n_e(e) = z \land \mathrm{Sat}_{\Sigma_{n+1}}(z,x)$ 
   for some $z$. But $z$ is unique and $\varphi^n_e(e) = \ulcorner \zeta\urcorner$, 
   so $\mathrm{Sat}_{\Sigma_{n+1}}(\ulcorner \zeta\urcorner,x)$ and therefore $\zeta(x)$.
   
   Conversely, observe that since $\varphi^n_e(e) = \ulcorner \zeta\urcorner$, 
   $\xi(x)$ follows from $\zeta(x)$ by Fact \ref{fact:satisfaction}
   and $\exists$-introduction.
 \end{quote}

Hence the theory 
$\T + \forall x (\xi(x) \leftrightarrow \mathrm{Sat}_{\Sigma_{n+1}}(\ulcorner\zeta\urcorner,x))$ is consistent. If $k \in Z$, then $\T \vdash \mathrm{Sat}_{\Sigma_{n+1}}(\ulcorner\zeta\urcorner,k)$, so 
$\T + \forall x (\xi(x) \leftrightarrow \mathrm{Sat}_{\Sigma_{n+1}}(\ulcorner\zeta\urcorner,x)) \vdash \xi(k)$, and similarly for $k \notin Z$.
Hence the consistent theory 
$\T + \forall x (\xi(x) \leftrightarrow \mathrm{Sat}_{\Sigma_{n+1}}(\ulcorner\zeta\urcorner,x))$ 
proves all the sentences in $Y$, so $\T + Y$ is consistent. By compactness, it 
follows that $\T + X$ is consistent, and therefore $\xi(x)$ is independent over $\T$.
\end{proof}

The G\"odel-Rosser incompleteness theorem \ref{theorem:1} for arithmetically 
definable theories follows immediately from the result above. 
While the generalisation to arithmetically definable theories is new,
the basic idea of this proof is due to \citet[Corollary 1.1]{kripke1962}, who 
used it to rederive Mostowski's result from his own theorem on the existence of 
flexible formulae. Here, we understand flexibility in the following sense:

\begin{definition}
 A formula $\gamma(x)$ is \emph{flexible} for $\Gamma$ over $\T$ if, for every 
 $\delta(x) \in \Gamma$, the theory $\T + \forall x (\gamma(x) \leftrightarrow \delta(x))$ 
 is consistent.
\end{definition}

The definitions used by Kripke obscure the original content of his theorem, 
but, in hindsight, his proof yields that for every consistent, r.e.\ extension 
$\T$ of $\ID$, there is a $\Sigma_{n+1}$ formula that is flexible for 
$\Sigma_{n+1}$ over $\T$. Striving for some unification, we derive a hierarchical 
version of Kripke's theorem by generalising a result of Lindstr\"om's 
\citeyearpar[Proposition 2]{lindstrom1984}; which in turn is a generalisation 
of both Mostowski's and Kripke's results, as well as of Scott's famous lemma 
used to realise countable Scott sets as standard systems of models of 
$\pa$ \citeyearpar{scott1962}.

\begin{thm}\label{theorem:flexibility}
 Let $\T$ be a $\Sigma_{n+1}$-definable
 extension of $\mathrm{I}\Sigma_{m} + \mathrm{exp}$, with $m\geq n$. 
 For every $\Sigma_m$ formula $\phi(x)$, there is a $\Sigma_{m+1}$ 
 formula $\gamma(x)$ such that for every $g : \omega \to \{0,1\}$, if
 \[
  \T_g = \T + \{ \phi(k) ^{g(k)}:k\in\omega\}
 \]
is $\Sigma_n$-sound, then $\gamma(x)$ is flexible for $\Sigma_{m+1}$ over $\T_g$.
\end{thm}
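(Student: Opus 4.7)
The plan is to follow the diagonal construction of Theorem~\ref{theorem:independence}, strengthening it to handle both the parametric family $\T_g$ and flexibility (rather than mere independence). First, introduce the template
\[
\gamma_y(x) \;:=\; \exists z\bigl(R^n(y,y,z) \land \Sat_{\Sigma_{m+1}}(z,x)\bigr),
\]
and define a partial function $f$ by $f(y) = k$ iff there is a finite set $Y$ of $\phi$-assignments and a $\T + Y$-proof of $\lnot \forall x(\gamma_y(x) \leftrightarrow \Sat_{\Sigma_{m+1}}(k,x))$, with $(k, Y, p)$ chosen minimal in some fixed enumeration. Since $\T$ is $\Sigma_{n+1}$-definable, the graph of $f$ is $\Sigma_{n+1}$, so $f$ is recursive in $\emptyset^{(n)}$ by Fact~\ref{fact:post} and has an $n$-index $e$ by Fact~\ref{fact:enumeration}. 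Set $\gamma(x) := \gamma_e(x)$; this is $\Sigma_{m+1}$ because $m \geq n$ forces $R^n \in \Sigma_{n+1} \subseteq \Sigma_{m+1}$, while $\Sat_{\Sigma_{m+1}} \in \Sigma_{m+1}$.

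For the core argument, suppose $\T_g$ is $\Sigma_n$-sound and, for contradiction, that $\T_g + \forall x(\gamma(x) \leftrightarrow \delta(x))$ is inconsistent for some $\delta \in \Sigma_{m+1}$. By compactness and Fact~\ref{fact:satisfaction}, $\T + Y_0 \vdash \lnot\forall x(\gamma(x) \leftrightarrow \Sat_{\Sigma_{m+1}}(\ulcorner\delta\urcorner, x))$ for some finite $Y_0 \subseteq \T_g$, so $f(e)$ is defined; write $f(e) = k'$ with witness $(k', Y', p')$. Then $\mathbb{N} \models R^n(e,e,k')$, and combining Facts~\ref{fact:definability} and~\ref{fact:selection} yields
\[
\T + \Th_{\Sigma_{n+1}}(\mathbb{N}) \vdash \forall x\bigl(\gamma(x) \leftrightarrow \Sat_{\Sigma_{m+1}}(k', x)\bigr),
\]
while by construction of $f$ one has $\T + Y' \vdash \lnot \forall x(\gamma(x) \leftrightarrow \Sat_{\Sigma_{m+1}}(k', x))$.

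The crux is now to deduce inconsistency of $\T_g + \Th_{\Sigma_{n+1}}(\mathbb{N})$, which contradicts $\Sigma_n$-soundness via Fact~\ref{fact:soundness}. For this one must ensure that the witness $Y'$ returned by $f$ lies in $\T_g$; I expect this uniformization to be the main obstacle and the key technical step of the proof. The natural way to achieve it is to enumerate witnesses in the defining relation of $f$ so as to minimize first over $k$ --- guaranteeing that, under the assumed failure of flexibility with $\delta$, one has $k' = \ulcorner\delta\urcorner$ and thus a $g$-compatible $Y' \subseteq \T_g$ is available (e.g.\ $Y_0$ itself). Some care with complexities is needed so that this minimization does not push $f$ out of the class of $n$-recursive functions; this can be handled via Fact~\ref{fact:selection} or by a suitable reformulation of the search, but once arranged, the desired contradiction becomes immediate.
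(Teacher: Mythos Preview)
Your approach has a genuine gap at exactly the point you flag as ``the main obstacle'', and your proposed fix does not work. Minimising first over $k$ does \emph{not} guarantee $k' = \ulcorner\delta\urcorner$: the search in the definition of $f$ ranges over \emph{all} finite sets $Y$ of $\phi$-assignments, not just those compatible with the particular $g$ under consideration. So it may well happen that some $\delta'$ with $\ulcorner\delta'\urcorner < \ulcorner\delta\urcorner$ is refuted by $\T + Y''$ for a $Y''$ that is compatible with a \emph{different} $g'$ (perhaps one for which $\T_{g'}$ is not even $\Sigma_n$-sound). In that case $f(e) = \ulcorner\delta'\urcorner$, and you obtain $\T_g + \Th_{\Sigma_{n+1}}(\mathbb{N}) \vdash \forall x(\gamma(x)\leftrightarrow\delta'(x))$; but since $\T_g + \forall x(\gamma(x)\leftrightarrow\delta'(x))$ may be perfectly consistent, no contradiction with the $\Sigma_n$-soundness of $\T_g$ follows. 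The single global fixed point $\gamma$ cannot ``know'' which $g$ you are working in.

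The paper resolves this by making the sequence $s$ an explicit argument of the function: one defines $f(s,\eta)=\sigma$ when $\T + \phi(0)^{(s)_0} + \dots + \phi(k)^{(s)_k}$ refutes $\forall x(\eta(x)\leftrightarrow\sigma(x))$, and then builds the predicate $\mathrm{Seq}_\phi(s)$ (``$s$ records the actual truth values of $\phi(0),\dots,\phi(k)$'') into the diagonal formula, setting
\[
\gamma(x) \;\leftrightarrow\; \exists s\,\exists z\bigl(\mathrm{Seq}_\phi(s)\land R^n(e,s,\ulcorner\gamma\urcorner,z)\land \Sat_{\Sigma_{m+1}}(z,x)\bigr).
\]
In any model of $\T_g$ the conjunct $\mathrm{Seq}_\phi(s)$ is satisfied only by initial segments of $g$, so the model itself selects the correct branch; the witness $s$ used in the contradiction is then automatically $g$-compatible. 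This dynamic selection is the missing idea in your argument, and it cannot be recovered by adjusting the order of minimisation alone.
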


\begin{proof}
 Fix $n$ and let $\phi(x) \in \Sigma_m$, with $m\geq n$. Let 
 $f(s,\ulcorner \eta \urcorner) = \ulcorner \sigma \urcorner$ iff the following holds: 
 \begin{enumerate}
  \item $s$ is binary sequence of length $k + 1$;
  \item there is a proof $p$ of $\lnot \forall x (\eta(x) \leftrightarrow \sigma(x))$ 
        in $\T + \phi(0)^{(s)_0} + \dots + \phi(k)^{(s)_k}$;
  \item for every $q<p$ and any $\ulcorner\sigma_0\urcorner \leq \ulcorner \sigma \urcorner$, 
        $q$ is not a proof of $\lnot \forall x (\eta(x) \leftrightarrow \sigma_0(x))$ 
        in $\T + \phi(0)^{(s)_0} + \dots + \phi(k)^{(s)_k}$.
 \end{enumerate}
Here $(s)_k$ denotes the $k$th element of the sequence $s$.

Since $\T$ is $\Sigma_{n+1}$-definable, there is a deductively equivalent 
$\Pi_n$-definition of $\T$. Hence, the relation $f(x,y) = z$ is r.e.\ in 
$\emptyset^{(n)}$, and therefore partial $n$-recursive. Let $e$ be an $n$-index for $f$.

Let $\mathrm{Seq}_\phi(x)$ be the formula
\[
 \forall y < l(x)(y \varepsilon x \leftrightarrow \phi(y))
\]
where $l(x)$ denotes the length of $x$ (this is the formula ``$x$ is a $l(x)$-piece of $\phi$''). 
Whenever $\phi(x)$ is $\Sigma_m$, $\mathrm{Seq}_\phi(x)$ is $\Sigma_0(\Sigma_m)$, and 
since $\T \vdash \mathrm{I}\Sigma_m$, it is $\Delta_{m+1}$ in $\T$. 
Let, by Fact \ref{fact:diagonal}, $\gamma(x)$ be such that
\[
\T \vdash \forall x(\gamma(x) \leftrightarrow \exists s \exists z(\mathrm{Seq}_\phi(s) \land \varphi^n_e(s, \ulcorner \gamma \urcorner)= z \land \mathrm{Sat}_{\Sigma_{m+1}}(z,x)))\text{.}
\]

Since $\T \vdash \mathrm{I}\Sigma_m$ and $m\geq n$, the formula strongly 
representing $f$ in $\T + \mathrm{Th}_{\Pi_{n}}(\mathbb{N})$ is equivalent 
to a $\Sigma_{n+1}$ formula in $\T$. It follows that $\gamma(x)$ is equivalent 
to a $\Sigma_{m+1}$ formula in $\T$.

Suppose, for a contradiction, that there is a $g : \omega \to \{0,1\}$ and a 
$\sigma(x) \in \Sigma_{m+1}$ such that $\T_g$ is $\Sigma_n$-sound, but 
$\T_g + \forall x(\gamma(x) \leftrightarrow \sigma(x))$ is inconsistent.
If there are more than one such $\sigma(x)$ for a given $g$, consider the 
one with the least Gödel number. There is then an initial subsequence $s$ of 
$g$, of length $k+1$ for some $k$, such that $p$ is a proof of 
$\lnot \forall x(\gamma(x) \leftrightarrow \sigma(x))$ in 
$\T + \phi(0)^{(s)_0} + \dots + \phi(k)^{(s)_k}$. Let $s$ be the initial 
subsequence of $g$ corresponding to the least such $p$.

It is now clear that neither $p$ nor any $q <p$ can be a proof of 
$\lnot \forall x(\gamma(x) \leftrightarrow \sigma_0(x))$ in 
$\T + \phi(0)^{(s)_0} + \dots + \phi(k)^{(s)_k}$ for any formula $\sigma_0(x)$ 
whose Gödel number is less than $ \ulcorner \sigma \urcorner$. Hence, 
by definition, $f(s,\ulcorner \gamma \urcorner) = \ulcorner \sigma \urcorner$, and by Fact \ref{fact:definability},
\[
\T + \Th_{\Pi_{n}}(\mathbb{N}) \vdash \varphi^n_e(s, \ulcorner \gamma \urcorner) = \ulcorner \sigma\urcorner\text{.}
\] 
By choice of $s$, $\T_g \vdash \mathrm{Seq}_\phi(\ulcorner s \urcorner)$, so 
$\T_g + \Th_{\Pi_{n}}(\mathbb{N}) \vdash \forall x (\gamma(x) \leftrightarrow \sigma(x))$ 
by an argument similar to that in the proof of Theorem \ref{theorem:independence}. 
Then $\T_g +\Th_{\Pi_{n}}(\mathbb{N})$ is inconsistent, contradicting the assumption 
that $\T_g$ was $\Sigma_n$-sound.
\end{proof}

By choosing $\phi(x)$ as $\top$ in the construction above, we obtain the expected 
hierarchical version of Kripke's theorem. A similar, but not entirely correct, claim
is made by \citet[Theorem 4.8]{blanck2017a}.
 \begin{cor}\label{cor:kripke}
 Let $\T$ be a $\Sigma_{n+1}$-definable, $\Sigma_n$-sound extension of 
 $\ISn + \mathrm{exp}$. For all $m\geq n$, there is a $\Sigma_{m+1}$ formula 
 $\gamma(x)$ that is flexible for $\Sigma_{m+1}$ over $\T$.
\end{cor}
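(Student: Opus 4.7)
The plan is to instantiate Theorem \ref{theorem:flexibility} with a trivially decidable $\phi(x)$, so that the extra parameter sequence $g$ collapses and $\T_g$ becomes just $\T$ itself. Concretely, I would take $\phi(x) := (0 = 0)$, which lies in $\Sigma_0 \subseteq \Sigma_m$ for every $m \geq n$, so the complexity hypothesis of the theorem is satisfied.

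Since $\tq \vdash \phi(k)$ for every $k\in\omega$, the unique $g\in {}^\omega 2$ for which all of $\phi(0)^{g(0)}, \phi(1)^{g(1)}, \dots$ are true is the constant $0$ function. For this $g$, each axiom $\phi(k)^{g(k)}$ is $\phi(k)$, already a theorem of $\T \supseteq \tq$, so $\T_g$ and $\T$ have the same theorems. In particular, $\T_g$ inherits the assumed $\Sigma_n$-soundness of $\T$. Theorem \ref{theorem:flexibility} therefore supplies a $\Sigma_{m+1}$ formula $\gamma(x)$ such that for every $\sigma(x) \in \Sigma_{m+1}$, the theory $\T_g + \forall x(\gamma(x) \leftrightarrow \sigma(x)) = \T + \forall x(\gamma(x) \leftrightarrow \sigma(x))$ is consistent; that is, $\gamma(x)$ is flexible for $\Sigma_{m+1}$ over $\T$, as required.

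There is essentially no obstacle: the only thing to check is that the chosen $\phi$ meets the syntactic requirement $\phi \in \Sigma_m$ with $m\geq n$ (trivial) and that the truth-encoding $g$ renders $\T_g$ a conservative reformulation of $\T$ (immediate from decidability in $\tq$). Any other decidable $\phi$ would work equally well; the role of decidability is simply to guarantee that one of the $2^\omega$-many theories $\T_g$ produced by the theorem coincides with $\T$ itself.
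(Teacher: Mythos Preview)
Your argument is correct and is precisely the approach the paper indicates: instantiate Theorem~\ref{theorem:flexibility} with a decidable $\phi(x)$ so that the unique $g$ keeping $\T_g$ consistent (namely the constant~$0$ function) yields $\T_g = \T$, and then $\Sigma_n$-soundness of $\T$ transfers immediately. There is nothing to add.
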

Mostowski's theorem for r.e.\ extensions of $\ID$ then follows immediately by 
using the method described in the proof of Theorem \ref{theorem:independence}. 
A similar argument also yields Scott's lemma. 

The next objective is to show how the hierarchical version of Kripke's theorem 
can be formalised in  $\Pi_n$-complete extensions of $\ISnp$. 
A similar, but not entirely correct, claim is made by \citet[Theorem 5.8]{blanck2017a}. The present 
proof is a minor modification of an argument of \citet[Theorem 5.1]{blanck2017a}.
  
 \begin{thm}\label{theorem:formal_flexibility}
  Let $\s$ be a $\Pi_n$-complete, consistent extension
  of $\ISnp$, and let $\T$ be $\Sigma_{n+1}$-definable. For all $m \geq n$, 
  there is a $\Sigma_{m+1}$ formula $\gamma(x)$ such that:
  \begin{enumerate}
   \item $\ISnp \vdash \con_{\T,\Sigma_{n+1}} \rightarrow \forall x \lnot \gamma(x)$;
   \item if $\sigma(x) \in \Sigma_{m+1}$, then every model of 
        $\s + \con_{\T,\Sigma_{n+1}}$ has a $\Sigma_n$-elementary extension 
        to a model of $\T + \forall x (\gamma(x) \leftrightarrow \sigma(x))$.
  \end{enumerate}
 \end{thm}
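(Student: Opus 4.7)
The plan is to formalise the Kripke-style construction of Theorem~\ref{theorem:flexibility}, uniformly in $\sigma\in\Sigma_{m+1}$, and to extract both (1) and (2) from a single internal provability claim. Let $g(\eta)$ be the partial $n$-recursive function that, on input $\eta$, returns the first pair $\langle\sigma,z\rangle$ (in some primitive recursive enumeration of $\omega\times\omega$) with $\sigma\in\Sigma_{m+1}$ and $\prf_{\T,\Sigma_{n+1}}(\ulcorner\neg\forall y(\Sat_{\Sigma_{m+1}}(\dot\eta,y)\leftrightarrow\Sat_{\Sigma_{m+1}}(\dot\sigma,y))\urcorner,z)$. Using an $n$-index $e$ for $g$ (Fact~\ref{fact:enumeration}) and the acceptable $\Sigma_{n+1}$ representation $R^n:=\mathrm{Sel}\{\Sat_{\Sigma_{n+1}}\}$ from Fact~\ref{fact:selection} and the subsequent remark, the parametric diagonal lemma (Fact~\ref{fact:diagonal}) yields a $\Sigma_{m+1}$ formula $\gamma(x)$ with
\[
\tq\vdash \gamma(x)\leftrightarrow\exists w\,\bigl[R^n(e,\ulcorner\gamma\urcorner,w)\wedge\Sat_{\Sigma_{m+1}}(\pi_1(w),x)\bigr].
\]

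The Key Lemma to prove is
\[
\s\vdash \exists w\,R^n(e,\ulcorner\gamma\urcorner,w)\to\pr_{\T,\Sigma_{n+1}}(\ulcorner\bot\urcorner).
\]
Reasoning in $\s$: the hypothesis, together with Fact~\ref{fact:selection}(2), produces a unique witness $w^*=\langle\sigma^*,z^*\rangle$, and by the definition of $g$ the component $z^*$ already witnesses $\pr_{\T,\Sigma_{n+1}}(\ulcorner\neg\forall y(\gamma(y)\leftrightarrow\Sat(\pi_1(\dot w^*),y))\urcorner)$. For the matching positive provability, internalise the $\tq$-provable defining equivalence of $\gamma$ via Fact~\ref{fact:lob}(1) (in its free-variable form, using Feferman's dots for $w^*$) and inject the $\Sigma_{n+1}$-truth $R^n(e,\ulcorner\gamma\urcorner,\dot w^*)$ supplied by provable $\Sigma_{n+1}$-completeness (Fact~\ref{fact:completeness}). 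Inside $\pr_{\T,\Sigma_{n+1}}$ the auxiliary $R^n$-axiom witnesses $\exists w(R^n\wedge\Sat)$ by $w^*$ directly, while the minimality clause packaged into $\mathrm{Sel}$ tethers any competing witness back to $w^*$; Fact~\ref{fact:lob}(2) then compiles the two contradictory provabilities into $\pr_{\T,\Sigma_{n+1}}(\ulcorner\bot\urcorner)$.

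Part (1) is now immediate: in $\s+\con_{\T,\Sigma_{n+1}}$, an $x$ with $\gamma(x)$ supplies the hypothesis of the Key Lemma, and $\pr_{\T,\Sigma_{n+1}}(\bot)$ would contradict $\con_{\T,\Sigma_{n+1}}$. For part (2), fix $\sigma(x)\in\Sigma_{m+1}$ and $\mathcal{M}\models\s+\con_{\T,\Sigma_{n+1}}$. If $\mathcal{M}\models \pr_{\T,\Sigma_{n+1}}(\ulcorner\neg\forall x(\gamma\leftrightarrow\sigma)\urcorner)$, the $\prf$-witness places $\langle\ulcorner\sigma\urcorner,z\rangle$ in the search range of $g$, again triggering the Key Lemma for a contradiction; hence $\mathcal{M}\models\con_{\T+\forall x(\gamma\leftrightarrow\sigma),\Sigma_{n+1}}$ (with parameters from $\mathcal{M}$ admitted, since $\mathcal{M}\models\ISnp$). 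The $\Sigma_{n+1}(\mathcal{M})$-definable theory $\T^{\#}$ obtained by adding to $\T+\forall x(\gamma\leftrightarrow\sigma)$ the full $\Sigma_{n+1}$-diagram of $\mathcal{M}$ is therefore $\mathcal{M}$-consistent, and Fact~\ref{fact:ACT} yields an end-extension $\mathcal{N}\models\T^{\#}$ of $\mathcal{M}$. The inclusion of the $\Sigma_{n+1}$-diagram, together with $\Pi_n\subseteq\Sigma_{n+1}$ up to logical equivalence, upgrades this end-extension to a $\Sigma_n$-elementary one.

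The main obstacle is the $\pr_{\T,\Sigma_{n+1}}$-lift of the biconditional $\gamma(y)\leftrightarrow\Sat(\pi_1(\dot w^*),y)$ for a possibly non-standard $w^*\in M$: since $\T$ extends only $\tq$, $\T$ need not prove functionality of $R^n$, so the L\"ob-internalised derivation must rely solely on the defining $\tq$-equivalence of $\gamma$ and the single injected $\Sigma_{n+1}$-truth $R^n(e,\ulcorner\gamma\urcorner,\dot w^*)$, leveraging the minimality clause packaged into $\mathrm{Sel}\{\Sat_{\Sigma_{n+1}}\}$ and Feferman's dot formalisation of Fact~\ref{fact:lob} with free $w^*$. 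A secondary bookkeeping point is that Fact~\ref{fact:ACT} strictly speaking only delivers an end-extension; the $\Sigma_n$-elementarity is forced precisely by the absorption of $\mathcal{M}$'s $\Sigma_{n+1}$-diagram into $\T^{\#}$.
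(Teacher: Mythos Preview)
Your overall architecture is close to the paper's, but the fixed point you build forces you into an internal argument that the paper deliberately avoids, and this is where the genuine gap lies.

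In your Key Lemma you need, inside $\pr_{\T,\Sigma_{n+1}}$, the biconditional
\[
\forall y\bigl(\Sat_{\Sigma_{m+1}}(\ulcorner\gamma\urcorner,y)\leftrightarrow \Sat_{\Sigma_{m+1}}(\pi_1(\dot w^*),y)\bigr).
\]
To obtain the direction $\gamma(y)\to\Sat(\pi_1(w^*),y)$ you must collapse any competing $R^n$-witness $w$ to $w^*$, i.e.\ you need functionality of $R^n=\mathrm{Sel}\{\Sat_{\Sigma_{n+1}}\}$ \emph{inside} $\pr_{\T,\Sigma_{n+1}}$. Fact~\ref{fact:selection}(2) places this in $\mathrm{I}\Sigma_n$, not in $\tq$, and the ``minimality clause packaged into $\mathrm{Sel}$'' does not on its own give uniqueness over $\tq$ (already trichotomy of $<$ is unavailable). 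Moreover, to connect $\Sat_{\Sigma_{m+1}}(\ulcorner\gamma\urcorner,\cdot)$ with $\gamma(\cdot)$ (and likewise for $\sigma$ in part~(2)) inside $\pr_{\T,\Sigma_{n+1}}$ you would need $\T$ to prove the instance of Fact~\ref{fact:satisfaction}, which is an $\ID$-theorem. Since $\T$ is only assumed to extend $\tq$, neither ingredient is available, and the injected $\Sigma_{n+1}$-truth $R^n(e,\ulcorner\gamma\urcorner,\dot w^*)$ alone cannot carry the load: the missing statements are $\Pi_{n+1}$ and cannot be fed in as true $\Sigma_{n+1}$ side-premises. The same problem recurs in your part~(2), where converting a $\T,\Sigma_{n+1}$-proof of $\neg\forall x(\gamma\leftrightarrow\sigma)$ into one of the $\Sat$-version (so as to land in the search range of $g$) again presupposes the $\ID$-equivalences inside $\T$.

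The paper sidesteps exactly this: it takes $e$ to be the G\"odel number of $\phi(x,z):=\pr_{\T,\Sigma_{n+1}}(\ulcorner\lnot R^n(\dot x,\dot x,\dot z)\urcorner)$ and works with $R^n(e,e,z)$ directly. Then the internal contradiction is simply $R^n(e,e,z)$ versus $\lnot R^n(e,e,z)$: the first is injected by provable $\Sigma_{n+1}$-completeness, and the second falls out of unfolding $R^n(e,e,z)\to\Sat_{\Sigma_{n+1}}(e,e,z)\to\phi(e,z)$ \emph{in $\s$}, not inside the box. No functionality of $R^n$ and no $\Sat$-equivalence is ever needed under $\pr_{\T,\Sigma_{n+1}}$. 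For part~(2) the paper then passes to a model of $\T+\Th_{\Sigma_{n+1}}(\mathcal{M})+R^n(e,e,\ulcorner\sigma\urcorner)$ rather than of $\T+\forall x(\gamma\leftrightarrow\sigma)$ directly. Your end-game via the $\Sigma_{n+1}$-diagram (in lieu of the paper's appeal to Fact~\ref{fact:friedman}) is a nice simplification, but it is reached only after the Key Lemma, which as written does not go through for $\T\supseteq\tq$.
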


 \begin{proof}
  Fix $n$, and let $\phi(x,z)$ be the $\Sigma_{n+1}$ formula 
  $\pr_{\T,\Sigma_{n+1}}(\ulcorner \varphi^n_{\dot{x}}(\dot{x}) \neq \dot{z} \urcorner)$. 
  Let $e$ be the G\"odel number of $\phi(x,z)$. 
  
  Recall that $\varphi^n_x(y) = z$ is shorthand for 
  $\mathrm{Sel}\{\mathrm{Sat}_{\Sigma_{n+1}}\}(x,y,z)$. Therefore, by Fact \ref{fact:selection}.1 we have
    \begin{equation}\label{eq:1}
   \ISnp \vdash \forall z (\varphi^n_{e}(e) = z \rightarrow \mathrm{Sat}_{\Sigma_{n+1}}(e,e,z))\text{,}
  \end{equation}
  so by construction of $\phi(x,z)$ and choice of $e$,
  \begin{equation}\label{eq:2}
   \ISnp \vdash \forall z (\varphi^n_{e}(e) = z \rightarrow \pr_{\T,\Sigma_{n+1}}(\ulcorner \varphi^n_{e}(e) \neq \dot{z}\urcorner))\text{.}
  \end{equation}
By Fact \ref{fact:completeness}
  \begin{equation}\label{eq:3}
   \ISnp \vdash \forall z (\varphi^n_{e}(e) = z \rightarrow \pr_{\T,\Sigma_{n+1}}(\ulcorner \varphi^n_{e}(e) = \dot{z}\urcorner))\text{,}
  \end{equation}
  so (\ref{eq:2}) and (\ref{eq:3}) together with Fact \ref{fact:lob} give
  \begin{equation}\label{eq:4}
   \ISnp \vdash \forall z (\con_{\T,\Sigma_{n+1}} \rightarrow \varphi^n_{e}(e) \neq z)\text{.}
  \end{equation}

  Now, observe that
  \begin{equation}\label{eq:5}
   \ISnp \vdash \exists z \lnot \con_{\T + \varphi^n_{e}(e) = \dot{z},\Sigma_{n+1}} \leftrightarrow \exists z \pr_{\T,\Sigma_{n+1}}(\ulcorner \varphi^n_{e}(e) \neq \dot{z}\urcorner)\text{.}
  \end{equation}
By construction of $\phi(x,z)$, the right hand side of the equivalence of 
(\ref{eq:5}) is identical to $\exists z \phi(e,z)$, and
by Fact \ref{fact:satisfaction}, we have
\begin{equation}\label{eq:6}
\ISnp \vdash \exists z \phi(e,z) \leftrightarrow \exists z \mathrm{Sat}_{\Sigma_{n+1}}(e,e,z)\text{.}
\end{equation}
Therefore, by Fact \ref{fact:selection}.3 and the convention on $\varphi^n_{e}$, (\ref{eq:5}) gives
\begin{equation}\label{eq:7}
   \ISnp \vdash \exists z \lnot \con_{\T + \varphi^n_{e}(e) = \dot{z},\Sigma_{n+1}} \leftrightarrow \exists z (\varphi^n_{e}(e) = z)\text{.}
\end{equation}

 Together with (\ref{eq:4}), this implies
 \begin{equation}\label{eq:8}
  \ISnp \vdash \forall z (\con_{\T,\Sigma_{n+1}} \rightarrow \con_{\T+\varphi^n_{e}(e) = \dot{z},\Sigma_{n+1}})\text{.}
 \end{equation}

Then the $\Sigma_{m+1}$ formula 
  $\gamma(x) \defeq \exists z (\varphi^n_{e}(e) = z \land \Sat_{\Sigma_{m+1}}(z,x))$ 
  is as desired, and the first part of the theorem follows directly from (\ref{eq:4}).
    For the second part, let $\mathcal{M}$ be any model of $\s + \con_{\T,\Sigma_{n+1}}$, 
    and let $\sigma(x)$ be any $\Sigma_{m+1}$ formula. By (\ref{eq:8}), we immediately 
    get $\mathcal{M} \models \con_{\T+\varphi^n_e(e) = \ulcorner\sigma\urcorner,\Sigma_{n+1}}$. 
    Moreover, since $\s$ is a $\Pi_n$-complete extension of $\ISnp$, and $\T$ is $\Sigma_{n+1}$-definable, the theory 
    $\T + \varphi^n_e(e) = \ulcorner\sigma\urcorner$ is $\Sigma_{n+1}$-definable in 
    $\mathcal{M}$, using Craig's trick.
    By Fact \ref{fact:ACT}, there is then a $\Sigma_{n}$-elementary end-extension 
    $\mathcal{K}$ of $\mathcal{M}$ that satisfies $\T + \varphi^n_e(e) = \ulcorner\sigma\urcorner$.
  Since $\mathcal{K}$ satisfies $\varphi^n_e(e) =\ulcorner \sigma \urcorner$, 
  it follows that $\mathcal{K} \models \forall x (\gamma(x) \leftrightarrow \sigma(x))$, as desired.
 \end{proof}

 \begin{cor}
 Let $\s$ be a $\Sigma_{n+1}$-definable, $\Pi_n$-complete, and consistent extension of $\ISnp$, and let $\gamma(x)$ be as in the proof of Theorem \ref{theorem:formal_flexibility}.
 Then, for each $\sigma(x) \in \Sigma_{m+1}$ with $m\geq n$,
$\s + \forall x (\gamma(x) \leftrightarrow \sigma(x))$ is $\Pi_{n+1}$-conservative 
over $\s + \con_{\s,\Sigma_{n+1}}$.
\end{cor}

  The next theorem has a different flavour than the earlier ones, and is a 
  generalisation of Woodin's theorem on the universal algorithm \citeyearpar{woodin2011}; 
  see also \citet[Theorem 3.1]{blanck_enayat2017}. A version for r.e.\ extensions of 
  $\pa$ is independently due to \citet[Theorem 18]{hamkins_20XXa}, and the proof 
  presented here uses a method that I learned from Shavrukov.
  The particular Solovay-style construction used in the proof is similar to the ones used by 
  \cite{berarducci1990} and \cite{japaridze1994}.
 \begin{thm}\label{theorem:woodin}
  Let $\T$ be a $\Sigma_{n+1}$-definable, $\Pi_n$-complete, and consistent extension of $\ISnp$. 
  There is a $\Sigma_{n+1}$-definable set $W_e$ such that:
  \begin{enumerate}
   \item $\ISnp + \mathrm{Th}_{\Pi_n}(\mathbb{N}) \vdash \text{``}W_e\text{ is finite''}$;
   \item $\ISnp + \mathrm{Th}_{\Pi_n}(\mathbb{N}) \vdash \con_{\T,\Sigma_{n+1}} \rightarrow W_e = \emptyset$;
   \item for each countable model $\mathcal{M} \models \T$, if $s$ is an 
        $\mathcal{M}$-finite set such that $\mathcal{M} \models W_e \subseteq s$, 
        then there is a $\Sigma_n$-elementary extension of $\mathcal{M}$ satisfying 
        $\T + W_e = s$.
  \end{enumerate}
 \end{thm}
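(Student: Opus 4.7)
The plan is to mimic the strategy of Theorem \ref{theorem:formal_flexibility}: construct $W_e$ via the formalized recursion theorem (Fact \ref{fact:recursion}) as a $\Sigma_{n+1}$-definable set whose defining formula encodes a self-referential response to $\Sigma_{n+1}$-provability statements about $W_e$ itself, then derive (1) via provable $\Sigma_{n+1}$-completeness (Fact \ref{fact:completeness}) and the L\"ob conditions (Fact \ref{fact:lob}), and derive (2) via the Refined ACT (Fact \ref{fact:ACT}) together with the Refined Friedman embedding (Fact \ref{fact:friedman}).

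Following Shavrukov's method, I would apply the recursion theorem to fix a $\Sigma_{n+1}$-index $e$ so that the defining formula for $W_e$ responds to each discovered $\prf_{\T,\Sigma_{n+1}}$-proof of ``$W_e \neq s$'' by enumerating witnesses precisely calibrated to clash with $\Sigma_{n+1}$-provable facts about $W_e$. The key target property the construction achieves, provably in $\ISnp$, is the non-inclusion schema
\[
 \forall s \bigl(\pr_{\T,\Sigma_{n+1}}(\ulcorner W_e \neq \dot{s}\urcorner) \rightarrow W_e \not\subseteq s\bigr).
\]
Its contrapositive, $\forall s(W_e \subseteq s \rightarrow \con_{\T,\Sigma_{n+1}+(W_e=s)})$, is the engine for (2). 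The construction is additionally arranged so that any element entering $W_e$ ultimately forces $W_e$ to equal a specific finite set $s^{*}$ whose equality to $W_e$ is $\Sigma_{n+1}$-provable (via Fact \ref{fact:completeness} applied to the explicit $\Sigma_{n+1}$-definition) while also being $\Sigma_{n+1}$-provably negated by the triggering proof; the L\"ob conditions then convert this into $\pr_{\T,\Sigma_{n+1}}(\bot)$, giving (1) under $\con_{\T,\Sigma_{n+1}}$.

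For (2), let $\mathcal{M}$ be a countable model of $\T$ and $s$ an $\mathcal{M}$-finite set with $\mathcal{M} \models W_e \subseteq s$. Applying the contrapositive schema inside $\mathcal{M}$ yields $\mathcal{M} \models \con_{\T,\Sigma_{n+1}+(W_e=s)}$, so the theory $\T + \Th_{\Sigma_{n+1}}(\mathcal{M}) + (W_e=s)$ is $\Sigma_{n+1}(\mathcal{M})$-definable via Fact \ref{fact:satisfaction} and consistent from $\mathcal{M}$'s perspective. Fact \ref{fact:ACT} then supplies an end-extension $\mathcal{K}$ of $\mathcal{M}$ satisfying this theory, and since $\SSy(\mathcal{M}) = \SSy(\mathcal{K})$ and $\Th_{\Sigma_{n+1}}(\mathcal{M}) \subseteq \Th_{\Sigma_{n+1}}(\mathcal{K})$ by construction, Fact \ref{fact:friedman} identifies $\mathcal{M}$ as a $\Sigma_n$-elementary initial segment of $\mathcal{K}$, producing the required $\Sigma_n$-elementary extension satisfying $\T + (W_e=s)$.

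The principal technical obstacle is engineering the self-referential construction so that (1) and (2) are \emph{simultaneously} achievable: a pure Rosser/L\"ob-style self-reference gives (1) cleanly but loses (2), while a straightforward ``witness-outside-$s$'' response yields (2) but not (1). Shavrukov's method threads the needle by letting each response to a proof of ``$W_e \neq s$'' do double duty---pinning $W_e$ to $s^{*}$ (to create the L\"ob-style inconsistency needed for (1)) and simultaneously exhibiting a $\Sigma_{n+1}$-provable witness relevant to the non-inclusion schema (needed for (2)). Transporting this to the hierarchical setting further demands Fact \ref{fact:selection} to extract functional witnesses uniformly from $\Sigma_{n+1}$-definitions, and the uniformity of Facts \ref{fact:completeness} and \ref{fact:lob} in parameters, so that all verifications are formalizable in $\ISnp$.
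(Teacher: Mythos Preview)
Your plan contains a genuine gap: the ``non-inclusion schema''
\[
\forall s\bigl(\pr_{\T,\Sigma_{n+1}}(\ulcorner W_e \neq \dot{s}\urcorner) \rightarrow W_e \not\subseteq s\bigr)
\]
cannot be provable in $\ISnp$ (nor in $\T$), so the route through Fact~\ref{fact:ACT} for part~(2) is blocked. To see why, note that $W_e$ is provably finite, so one may instantiate $s$ with the (code of the) set $W_e$ itself; the contrapositive then yields $\con_{\T,\Sigma_{n+1}+(W_e=W_e)}$, hence $\con_{\T,\Sigma_{n+1}}$. In other words your schema would make \emph{every} model of $\T$ satisfy $\con_{\T,\Sigma_{n+1}}$, contradicting the hierarchical second incompleteness theorem. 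The difficulty is intrinsic: part~(2) must apply even to models $\mathcal{M}\models \T$ with $\mathcal{M}\models\lnot\con_{\T,\Sigma_{n+1}}$, and in such models there is no hope of obtaining $\mathcal{M}\models\con_{\T,\Sigma_{n+1}+(W_e=s)}$, which is exactly the hypothesis Fact~\ref{fact:ACT} demands.

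The paper's proof resolves this by proving only the \emph{bounded} statements
\[
\T \vdash \forall s\bigl(W_e \subseteq s \rightarrow \con^m_{\T,\Sigma_{n+1}+W_e=s}\bigr)
\]
for each standard $m\in\omega$, obtained via small reflection (Fact~\ref{fact:reflection}) together with an auxiliary decreasing ``proof-rank'' function $r$ whose limit is provably nonstandard. The extension in (2) is then produced not by Fact~\ref{fact:ACT} but by the OHGL characterisation (Fact~\ref{fact:OHGL}), whose clause~(2)$\Leftrightarrow$(3) converts this schema of partial consistency statements (together with Fact~\ref{fact:code} to place the theory in $\SSy(\mathcal{M})$) into the desired $\Sigma_n$-elementary extension. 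Your invocation of Facts~\ref{fact:ACT} and~\ref{fact:friedman} mirrors the proof of Theorem~\ref{theorem:formal_flexibility}, but there the hypothesis $\con_{\T,\Sigma_{n+1}}$ is explicitly assumed; here it is not, and that is precisely why the OHGL detour is needed.
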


\begin{proof}
The set $W_e$ is defined (in $\ISnp + \mathrm{Th}_{\Pi_n}(\mathbb{N})$ and in the real world) as follows, using the formalisation of the recursion theorem (Fact \ref{fact:recursion}).
At the same time, an auxiliary function $r(x)$ is defined.
 
 \textbf{Stage $0$}: Set $W_{e,0} = \emptyset$, and $r(0) = \infty$.\footnote{Here 
 $\infty$ is a formal symbol that by definition is greater than all the natural numbers.}

 \textbf{Stage $x+1$}: Suppose $r(x) = m$. There are two cases:
\begin{quote}
\textbf{Case A}: $s$ is a finite set such that $s \supseteq W_{e,x}$,  $k < m$,
and $x$ witnesses a $\Sigma_{n+1}$ 
 formula $\sigma(s)$ such that $k$ is a proof in $\T + \Th_{\Sigma_{n+1}}(\mathbb{N})$ 
 of $\forall t (\sigma(t) \rightarrow W_e \neq t)$. 
 Should there be more than one eligible candidate for either $k$ or $s$, then choose the least such $k$, and then the least $s$ corresponding to that $k$.
 Then set $W_{e,x+1} = s$ and $r(x+1) = k$.

 \noindent\textbf{Case B}: Otherwise, set $W_{e,x+1} = W_{e,x}$ and $r(x+1) = m$.
\end{quote}
Let $W_e = \bigcup_x W_{e,x}$. 

To prove 1., reason as follows:
Since the proof relation for $\T + \Th_{\Sigma_{n+1}}(\mathbb{N})$ is $\Delta_{n+1}$ in $\ISnp + \mathrm{Th}_{\Pi_n}(\mathbb{N})$,
$W_e$ is r.e.\ in $\emptyset^{(n)}$, and therefore $\Sigma_{n+1}$ by Fact \ref{fact:post}.
Provably in $\ISnp + \mathrm{Th}_{\Pi_n}(\mathbb{N})$, we have that $W_{e,x+1} \supseteq W_{e,x}$, and $r(x+1) \leq r(x)$, 
so by the $\Sigma_{n+1}$-least number principle (which is available thanks to $\Sigma_{n+1}$-induction), there is a limit $R = \lim_x r(x)$.
For each $x$ with $W_{e,x+1} \neq W_{e,x}$, $\ISnp+ \mathrm{Th}_{\Pi_n}(\mathbb{N})$ proves $r(x+1) < r(x)$, whence 
there can only be finitely many such $x$. So $\ISnp + \mathrm{Th}_{\Pi_n}(\mathbb{N})\vdash \text{``}W_e \text{ is finite''}$.

Note also that $\T \vdash R > k$ for all $k \in \omega$. To show this, fix $k \in \omega$ 
and argue in $\T$:
\begin{quote}
 Suppose $R \leq k$. Let $y$ be minimal such that $r(y+1) = R$. Then 
 $W_e = W_{e,y+1} = s$ for some $s$ such that $R$ is a proof in 
 $\T + \Th_{\Sigma_{n+1}}(\mathbb{N})$ of $\forall t (\sigma(t)  \rightarrow W_e \neq t)$, 
 where $\sigma(s)$ is a true $\Sigma_{n+1}$ formula. 
\end{quote}
But, by Fact \ref{fact:exp_reflection},
\begin{quote}
since $\forall t (\sigma(t) \rightarrow W_e \neq t)$ is proved from a true 
$\Sigma_{n+1}$ sentence with a proof not exceeding $k$, it must be true. 
Since $\sigma(s)$ is true, $W_e \neq s$ is also true, and the contradiction 
proves $R > k$.
\end{quote}
To prove 2., argue for the contrapositive statement in $\ISnp +\Th_{\Pi_{n}}(\mathbb{N})$:
\begin{quote}
If $W_e = s \neq \emptyset$, then 
$\pr_{\T,\Sigma_{n+1}}^m(\ulcorner\forall t (\sigma(t) \rightarrow W_e \neq t)\urcorner)$ 
for some $m$ and some true $\Sigma_{n+1}$ formula $\sigma(s)$.
The relation $s \subseteq W_{e,x}$ is $\Delta_{n+1}$ by Fact \ref{fact:closure}.3, so
 $\pr_{\T,\Sigma_{n+1}}(\ulcorner \dot{s} \subseteq W_e \urcorner)$ follows by 
Fact \ref{fact:completeness}. 
Now reason inside $\pr_{\T,\Sigma_{n+1}}$: 

\begin{minipage}{0.85\textwidth}
\begin{quote}
 There is some $u = W_e$ with $u \supseteq s$, so by construction, $\sigma'(u)$ is 
 true, and $\pr_{\T,\Sigma_{n+1}}^k(\ulcorner \forall t (\sigma'(t) \rightarrow W_e \neq t)\urcorner)$ for some $k\leq m$ and some $\Sigma_{n+1}$ formula $\sigma'(u)$. 
 \end{quote}
\end{minipage}

\noindent Apply Fact \ref{fact:reflection}, and continue reasoning inside $\pr_{\T,\Sigma_{n+1}}$:

\begin{minipage}{0.85\textwidth}
\begin{quote}
Then $\forall t (\sigma'(t) \rightarrow W_e \neq t)$ and $\sigma'(u)$, so $W_e \neq u$.
\end{quote}
\end{minipage}

\noindent Then $\pr_{\T,\Sigma_{n+1}}(\ulcorner \exists u(W_e = u \land W_e \neq u)\urcorner)$, 
so Fact \ref{fact:lob} gives $\lnot \con_{\T,\Sigma_{n+1}}$ as desired.
\end{quote}
 
To prove 3., first fix $m \in \omega$. By Fact \ref{fact:exp_reflection}, there is 
a proof $k$ in $\T$ of 
\[
\forall t (\pr^m_{\T, \Sigma_{n+1}}(\ulcorner W_e \neq \dot{t} \urcorner) \rightarrow W_e \neq t)\text{.}
\]
Now reason in $\T$:
\begin{quote}
 Consider any finite $s \supseteq W_e$, and suppose $x$ is a proof $\leq m$ 
 of $W_e \neq s$ in $\T + \Th_{\Sigma_{n+1}}(\mathbb{N})$. 
 Then $s \supseteq W_{e,x+1}$, and therefore $r(x+1) \leq k$ by construction of 
 $r(x+1)$: here $\pr^m_{\T, \Sigma_{n+1}}(\ulcorner W_e \neq \dot{s} \urcorner)$ is 
 $\ISnp$-equivalent to a true $\Sigma_{n+1}$ sentence playing the role of $\sigma(s)$. But 
 $k < R \leq r(x+1)$, and the contradiction proves $\con^m_{\T + W_e = \dot{s}, \Sigma_{n+1}}$.
 \end{quote}
 
 Therefore for all $m\in\omega$, $\T \vdash \forall s \supseteq W_e \,\, \con^m_{\T + W_e=\dot{s}, \Sigma_{n+1}}$.
 
For the final part of the proof, let $\mathcal{M}$ be any countable model of $\T$, and let $s$ be any $\mathcal{M}$-finite set such that $\mathcal{M} \models W_e \subseteq s$. 
Since $\T$ is a $\Sigma_{n+1}$-definable, $\Pi_{n}$-complete extension of $\ISnp$,
Facts \ref{fact:satisfaction} and \ref{fact:code} imply that
$\T + \Th_{\Sigma_{n+1}}(\mathcal{M}) + W_e = s \in \SSy(\mathcal{M})$. Since 
$\T \vdash \con^m_{\T + W_e = \dot{s}, \Sigma_{n+1}}$ for all $m\in \omega$, 
Fact \ref{fact:OHGL} guarantees the existence of a $\Sigma_n$-elementary extension 
of $\mathcal{M}$ satisfying $\T + W_e = s$, which concludes the proof of the theorem.
\end{proof}

\begin{cor}\label{cor:kreisel}
With $\T$ as in Theorem \ref{theorem:woodin}, $\lnot \con_{\T,\Sigma_{n+1}}$ is 
$\Pi_{n+1}$-conservative over $\T$.
\end{cor}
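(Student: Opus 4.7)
The plan is to derive the corollary directly from the two clauses of Theorem~\ref{theorem:woodin}, without going through Fact~\ref{fact:OHGL}. Suppose toward a contradiction that some $\Pi_{n+1}$ sentence $\pi$ is provable in $\T + \lnot\con_{\T,\Sigma_{n+1}}$ but not in $\T$. By the completeness theorem, there is a countable model $\mathcal{M} \models \T + \lnot \pi$, and the task reduces to extending $\mathcal{M}$ in a $\Sigma_n$-elementary way to a model that still satisfies $\lnot \pi$ but now also satisfies $\lnot\con_{\T,\Sigma_{n+1}}$, against the assumption.

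Since $\T \supseteq \ISnp$, the construction in the proof of Theorem~\ref{theorem:woodin} guarantees $\mathcal{M} \models \text{``$W_e$ is finite''}$, so $W_e^{\mathcal{M}}$ is coded by some element of $M$. Via Ackermann's $\epsilon$, pick $s \in M$ coding $W_e^{\mathcal{M}} \cup \{0\}$; then $s$ is a nonempty $\mathcal{M}$-finite superset of $W_e^{\mathcal{M}}$. Applying clause~(2) of Theorem~\ref{theorem:woodin} produces a $\Sigma_n$-elementary extension $\mathcal{K}$ of $\mathcal{M}$ with $\mathcal{K} \models \T + W_e = s$. Nonemptiness of $s$ is $\Sigma_0$ and hence transfers from $\mathcal{M}$ to $\mathcal{K}$, so $\mathcal{K} \models W_e \neq \emptyset$; the contrapositive of clause~(1) then delivers $\mathcal{K} \models \lnot\con_{\T,\Sigma_{n+1}}$.

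To finish, recall that $\lnot\pi$ is $\Sigma_{n+1}$, and by the preservation principle recorded in the preliminaries, $\Sigma_{n+1}$ sentences transfer upward across $\Sigma_n$-elementary extensions; hence $\mathcal{K} \models \lnot\pi$ as well. Therefore $\mathcal{K} \models \T + \lnot\con_{\T,\Sigma_{n+1}} + \lnot\pi$, contradicting $\T + \lnot\con_{\T,\Sigma_{n+1}} \vdash \pi$. The only place that requires any care is the choice of $s$: one must insist that $s$ be \emph{nonempty} (not just that it contain $W_e^{\mathcal{M}}$), since it is precisely the nonemptiness of $W_e$ in $\mathcal{K}$ that lets clause~(1) force $\lnot\con_{\T,\Sigma_{n+1}}$. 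Beyond that bookkeeping, the argument is just the pairing of the two clauses of Theorem~\ref{theorem:woodin} with the standard upward transfer of $\Sigma_{n+1}$ sentences.
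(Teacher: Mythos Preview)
Your argument is correct. The substance is the same as the paper's: use clause~(2) of Theorem~\ref{theorem:woodin} to pass to a $\Sigma_n$-elementary extension in which $W_e \neq \emptyset$, and then invoke clause~(1) to obtain $\lnot\con_{\T,\Sigma_{n+1}}$. The difference is that the paper packages the final deduction (``every countable model of $\T$ has a $\Sigma_n$-elementary extension to a model of $\s$, hence $\s$ is $\Pi_{n+1}$-conservative over $\T$'') as an appeal to the implication $(3)\Rightarrow(1)$ of Fact~\ref{fact:OHGL}, whereas you inline that implication by hand via the contrapositive and the upward persistence of $\Sigma_{n+1}$ sentences across $\Sigma_n$-elementary extensions. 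Your route is slightly more self-contained---you avoid checking the side conditions of Fact~\ref{fact:OHGL} (that $\T + \lnot\con_{\T,\Sigma_{n+1}}$ is a $\Sigma_{n+1}$-definable extension of $\ISnp$ and lies in $\SSy(\mathcal{M})$)---while the paper's route makes explicit that this corollary is an instance of the general OHGL machinery. One cosmetic point: you lean on the proof of Theorem~\ref{theorem:woodin} rather than its statement when asserting that $W_e$ is $\mathcal{M}$-finite; this is harmless here, but strictly speaking you only need \emph{some} $\mathcal{M}$-finite nonempty $s \supseteq W_e$, and the paper's phrasing sidesteps even that by simply asserting the existence of the required extension.
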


\begin{proof}
 Every countable model of $\T$ has a $\Sigma_n$-elementary extension 
 satisfying $W_e \neq \emptyset$, and therefore also 
 $\T + \lnot \con_{\T,\Sigma_{n+1}}$ by the Theorem. By Fact \ref{fact:OHGL}, the conclusion follows.\footnote{As pointed out by one of the referees, adapting Kreisel's original proof of 
the $\Pi_1$-conservativity of $\lnot \con_\T$ over $\T$ is a simpler way to 
establish Corollary \ref{cor:kreisel} than going via Theorem \ref{theorem:woodin}.
}
\end{proof}

The set $W_e$ defined in Theorem \ref{theorem:woodin} can be used to prove 
results of a more Kripkean variety, by using the information contained in 
$W_e$ as codes for other sets. The next result
is of this kind, and improves on Theorem 7.21 of \citet{blanck2017a} by 
generalising to arithmetically definable theories. A version for r.e.\ extensions 
of $\pa$ is independently due to Hamkins \citeyearpar[Theorem 22(1)]{hamkins_20XXa}, who 
also noted that there is a very short proof of it from Theorem \ref{theorem:woodin}.
 
\begin{thm}\label{theorem:uniform_flexibility}
 Let $\T$ be a $\Sigma_{n+1}$-definable, $\Pi_n$-complete, and consistent extension of $\ISnp$. 
 For all $m \geq n$, there is a $\Sigma_{m+2}$ formula $\gamma(x)$ such that:
 \begin{enumerate}
   \item $\ISnp + \mathrm{Th}_{\Pi_n}(\mathbb{N}) \vdash \con_{\T,\Sigma_{n+1}} \rightarrow \forall x  \lnot \gamma(x)$;
  \item for every $\sigma(x) \in \Sigma_{m+2}$, every countable model of $\T$ 
  has a $\Sigma_n$-elementary extension satisfying 
  $\T + \forall x(\gamma(x) \leftrightarrow \sigma(x))$.
 \end{enumerate}
\end{thm}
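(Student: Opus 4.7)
The plan is to derive this quickly from Theorem \ref{theorem:woodin}, treating elements of the universal-algorithm set $W_e$ as time-stamped formula codes. Specifically, I would take
\[
 \gamma(x) := \exists i, c \bigl(\langle i, c\rangle \in W_e \land \Sat_{\Sigma_{m+2}}(c,x) \land \forall j, d (\langle j, d\rangle \in W_e \to j \leq i)\bigr)\text{,}
\]
reading $\langle i, c\rangle$ as ``at timestamp $i$, the target formula is the one coded by $c$''; thus $\gamma(x)$ says that some pair in $W_e$ with maximal first coordinate has second coordinate coding a $\Sigma_{m+2}$ formula true at $x$. Since $W_e$ is $\Sigma_{n+1}$, the trailing universal clause is $\Pi_{n+1}$ (the matrix is the negation of a $\Sigma_{n+1}$ formula disjoined with a bounded one, and universal quantification preserves $\Pi_{n+1}$); together with the $\Sigma_{n+1}$ membership clause and the $\Sigma_{m+2}$ satisfaction clause under a single existential, the result absorbs into $\Sigma_{m+2}$ because $m \geq n$ gives $\Sigma_{n+1}, \Pi_{n+1} \subseteq \Sigma_{m+2}$.

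Item (1) is then immediate from Theorem \ref{theorem:woodin}(1), which yields $\ISnp \vdash \con_{\T,\Sigma_{n+1}} \to W_e = \emptyset$; once $W_e$ is empty, the leading $\langle i, c\rangle \in W_e$ conjunct of $\gamma$ becomes unsatisfiable. For item (2), fix $\sigma(x) \in \Sigma_{m+2}$ and a countable $\mathcal{M} \models \T$. Since $W_e$ is provably $\mathcal{M}$-finite, the maximum $M^*$ of first coordinates of pairs currently in $W_e^{\mathcal{M}}$ exists in $\mathcal{M}$ (set $M^* = 0$ if there are none), and $s := W_e^{\mathcal{M}} \cup \{\langle M^*+1, \ulcorner \sigma \urcorner\rangle\}$ is an $\mathcal{M}$-finite superset of $W_e^{\mathcal{M}}$. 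By Theorem \ref{theorem:woodin}(2) there is a $\Sigma_n$-elementary extension $\mathcal{K} \models \T + W_e = s$; in $\mathcal{K}$, the unique pair in $W_e$ achieving the maximum first coordinate is $\langle M^*+1, \ulcorner \sigma \urcorner\rangle$, so Fact \ref{fact:satisfaction} gives $\mathcal{K} \models \forall x (\gamma(x) \leftrightarrow \sigma(x))$, as required.

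The main obstacle to anticipate is insulating $\gamma$ from whatever arbitrary --- possibly nonstandard --- elements may already populate $W_e^{\mathcal{M}}$ in a model where $\con_{\T,\Sigma_{n+1}}$ fails; one cannot simply take $s = \{\ulcorner \sigma \urcorner\}$ because $W_e^{\mathcal{M}}$ need not be empty or a subset of $\{\ulcorner \sigma \urcorner\}$. The timestamp device handles this precisely because we may always choose $s$ to extend $W_e^{\mathcal{M}}$ by a single pair whose first coordinate strictly exceeds all existing ones, so that $\gamma$ in the extension is determined by that new pair alone, regardless of what sat in $W_e$ before. Once this observation is in hand, the complexity count above and the direct invocation of Woodin's theorem make the rest routine.
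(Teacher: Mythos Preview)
Your proposal is correct and follows essentially the same approach as the paper. The paper lets $\lambda(z)$ be a $\Sigma_{n+2}$ formula expressing ``$z$ is the latest addition to $W_e$'' (using the intrinsic stage structure of the construction in Theorem~\ref{theorem:woodin}) and sets $\gamma(x) := \exists z(\lambda(z) \land \Sat_{\Sigma_{m+2}}(z,x))$, then extends $\mathcal{M}$ so that $\ulcorner\sigma\urcorner$ becomes the latest addition; your explicit timestamp pairs achieve the same ``most recent entry wins'' effect without appealing to the internal stage machinery, which is a clean and arguably more transparent implementation of the identical idea.
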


\begin{proof}[Proof sketch]
It is straightforward to adapt the construction in the proof of Theorem \ref{theorem:woodin} 
to produce an $\mathcal{M}$-finite binary sequence $S_e$, rather than a set 
\citep{woodin2011, blanck_enayat2017,hamkins_20XXa}. Assume an enumeration of 
$\Sigma_{m+2}$ formulae in which every $\Sigma_{m+2}$ formula occurs infinitely 
often, and that every finite binary sequence codes such a formula.
Let $\gamma(x)$ be the formula $\exists z (S_e = z \land \Sat_{\Sigma_{m+2}}(z,x))$. 
Since $S_e = z$ is at most $\Sigma_{n+2}$ and $m \geq n$, it follows that $\gamma(x)$ is $\Sigma_{m+2}$.

Pick any $\sigma(x) \in \Sigma_{m+2}$, let $\mathcal{M}$ be any countable model 
of $\T$ and let $s$ be $S_e$ as calculated within $\mathcal{M}$. By assumption 
on the enumeration of $\Sigma_{m+2}$ formulae, there is an $\mathcal{M}$-finite 
sequence $t \supseteq s$ such that $t$ codes $\ulcorner \sigma(x) \urcorner$.
By the sequence version of Theorem \ref{theorem:woodin}, there is a $\Sigma_n$-elementary 
extension $\mathcal{K}$ of $\mathcal{M}$ in which $S_e =t$. Then $\gamma(x)$ coincides 
with $\sigma(x)$ in $\mathcal{K}$, and therefore is as desired.
\end{proof}

The question remains to which extent $m+2$ can be replaced by $m+1$ in the statement of Theorem \ref{theorem:uniform_flexibility}. Some partial answers are already available: Theorem \ref{theorem:woodin} gives a positive answer restricted to $\Sigma_{m+1}$ formulae $\sigma(x)$ whose extension is $\mathcal{M}$-finite and for which $\mathcal{M} \models \forall x (\gamma(x) \rightarrow \sigma(x))$, while
Theorem \ref{theorem:formal_flexibility} can be seen as giving a partial positive answer that is restricted to models of $\T + \con_{\T,\Sigma_{n+1}}$. 
\citet[Chapter 7.4]{blanck2017a} lists several other partial answers to this question in a setting where $\T$ is an r.e.\ extension of $\pa$ and $n=0$. By using the principles of Section \ref{section:principles} of the present paper, those constructions can be easily modified to give equally unsatisfactory answers in the present setting.
The salient remaining question is as follows:

\begin{quest}
{ Let $\T$ be a $\Sigma_{n+1}$-definable, $\Pi_n$-complete, and consistent extension of $\ISnp$. 
 Is there a $\Sigma_{n+1}$ formula $\gamma(x)$ such that:
 \begin{enumerate}
   \item $\ISnp + \mathrm{Th}_{\Pi_n}(\mathbb{N}) \vdash \con_{\T,\Sigma_{n+1}} \rightarrow \forall x  \lnot \gamma(x)$;
  \item for every $\sigma(x) \in \Sigma_{n+1}$, every countable model of $\T + \forall x(\gamma(x) \rightarrow \sigma(x))$ 
  has a $\Sigma_n$-elementary extension satisfying 
  $\T + \forall x(\gamma(x) \leftrightarrow \sigma(x))$?
 \end{enumerate}}
\end{quest}

\section{Discussion}
By inspecting the results proved in Section \ref{section:applications}, we see two classes of $\Sigma_{n+1}$-definable theories emerging:
\begin{enumerate}
 \item $\Sigma_n$-sound extensions of $\ISn + \mathrm{exp}$; and
 \item $\Pi_n$-complete, consistent extensions of $\ISnp$.
\end{enumerate}
As suggested by Theorem \ref{theorem:1}, theories in the first class are strong enough for some applications. These include the results of \citet{salehi_seraji2017} and \citet{kikuchi_kurahashi2017}, together with Theorems \ref{theorem:independence} and 
\ref{theorem:flexibility} (and their corollaries) of the present paper. This success relies 
on the fact that $\Sigma_n$-soundness of $\T$ guarantees the consistency of 
$\T + \mathrm{Th}_{\Pi_{n}}(\mathbb{N})$, in which the $n$-recursive functions 
can be strongly represented by a formula that is $\Sigma_{n+1}$ in the presence of $\Sigma_n$-induction. 

The second class of theories is required to prove results on $\Sigma_n$-elementary
extensions of models of $\Sigma_{n+1}$-definable theories,
for example results on partial conservativity via the OHGL characterisation (Theorem \ref{theorem:formal_flexibility} and onwards). In these cases, $\Pi_n$-completeness
of $\T$ ensures that every model $\mathcal{M}$ of $\T$ is a $\Sigma_n$-elementary extension of the standard model, which in the presence of $\Sigma_{n+1}$-induction suffices for $\T$ to be $\Sigma_{n+1}$-definable in $\mathcal{M}$ by using Craig's trick. $\Sigma_{n+1}$-induction is also used to prove the 
arithmetised completeness theorem for $\Sigma_{n+1}$-definable theories, which is indispensable for constructing the $\Sigma_n$-elementary extensions.

The Facts listed in Section 3 should be enough to derive hierarchical generalisations 
for arithmetically definable extensions of fragments of $\pa$ of many of the theorems in, 
e.g., Lindstr\"om's classic \emph{Aspects of Incompleteness} \citeyearpar{lindstrom2003}. 
As suggested by the results in the present paper, some of these generalisations would 
apply only to $\Pi_n$-complete theories, while in other cases mere $\Sigma_n$-soundness would do. 
Others might not be prone to such generalisations at all, as shown by 
\citet[Theorem 11]{kurahashi2018} and pointed out to me by one of the referees. 
In fact, it would be interesting to see which of the results in, say, the first 
5 chapters of \emph{Aspects} (where the results do not depend on $\T$ being 
essentially reflexive) that are prone to such generalisations, using these principles.

\section{Acknowledgements}
This paper is based in part on some ideas that were left half-baked in 
the author's doctoral thesis \citet{blanck2017a}, written under the supervision 
of Ali Enayat. In particular, not entirely correct claims similar to 
Corollary \ref{cor:kripke} and Theorem \ref{theorem:formal_flexibility} 
have appeared there. I am grateful to Ali Enayat, Fredrik Engstr\"om, 
Joel David Hamkins, Aleksandre Maskharashvili, and Volodya Shavrukov for 
inspiration, discussion, and guidance. The comments of two excellent anonymous
referees have greatly helped improve the paper by suggesting the 
correct statements of some of the theorems, and by weeding out a number of 
false claims in an earlier version of this paper.

\vspace*{10pt}

\clearpage

\end{document}